\documentclass{amsart}

\usepackage{amsmath}
\usepackage{amssymb}
\usepackage{amsthm}
\usepackage{ascmac}
\usepackage{cases}
\usepackage{multicol}
\usepackage[all]{xy}
\usepackage{nccmath}
\usepackage{graphicx}
\usepackage{float}
\usepackage{color}

\sloppy

\newtheorem{theorem}{Theorem}[section] 
\newtheorem{proposition}[theorem]{Proposition} 
\newtheorem{lemma}[theorem]{Lemma}
\newtheorem{corollary}[theorem]{Corollary}

\newtheorem{definition}[theorem]{Definition}

\newcommand{\id}{\mathrm{id}}
\newcommand{\os}{\mathbin{\overline{*}}}
\newcommand{\us}{\mathbin{\underline{*}}}
\newcommand{\type}{\operatorname{type}}
\newcommand{\flow}{\mathrm{Flow}}
\newcommand{\col}{\mathrm{Col}}

\newcommand{\Q}{\mathcal{Q}}
\newcommand{\pr}{\mathrm{pr}}
\newcommand{\MCQ}{\mathsf{MCQ}}
\newcommand{\MCB}{\mathsf{MCB}}

\begin{document}

\title[A relationship between MCQ/MCB colorings]
{A relationship between multiple conjugation quandle/biquandle colorings}

\author{Tomo Murao}

\address
{Institute of Mathematics, University of Tsukuba,1-1-1 Tennoudai, Tsukuba, Ibaraki 305-8571, Japan.}
\email{t-murao@math.tsukuba.ac.jp}

\subjclass[2010]{Primary 57M25; Secondary 57M15, 57M27}

\keywords{multiple conjugation quandle, multiple conjugation biquandle, handlebody-link, spatial trivalent graph}

\date{}

\begin{abstract}
We define a functor $\Q$ from the category of multiple conjugation biquandles 
to that of multiple conjugation quandles.
We show that 
for any multiple conjugation biquandle $X$, 
there is a one-to-one correspondence 
between the set of $X$-colorings  
and that of $\Q(X)$-colorings diagrammatically 
for any handlebody-link and spatial trivalent graph.
In particular, 
we prove that 
the set of $G$-family of Alexander biquandles colorings 
is isomorphic to 
that of $G$-family of Alexander quandles colorings 
as modules.
\end{abstract}

\maketitle

\section{Introduction}

A quandle \cite{Joyce82, Matveev82} is an algebraic system whose axioms are derived from the
Reidemeister moves on oriented link diagrams, 
and a biquandle \cite{FJK04,FRS95,KR03} is a generalization of a quandle.
The two algebraic systems yield many invariants 
for not only classical links but also surface links, virtual links and so on.
In particular, some invariants obtained from biquandles are stronger than those obtained from quandles for virtual links \cite{Kauffman99}.

On the other hand, 
as a corollary of \cite{Soloviev00}, 
it follows that 
there is a one-to-one correspondence between 
the set of biquandle colorings and that of quandle colorings 
for any classical links, 
where in the proof of the statement, 
any classical link need to be represented by a closed braid diagram.

Recently, 
Ishikawa \cite{Ishikawapre} constructed a left adjoint functor $\mathcal{B}$ of a functor $\mathcal{Q}$ from the category of biquandles to that of quandles 
which is defined in \cite{Ashihara14}.
By using $\mathcal{B}$, 
he proved that 
we can reconstruct a fundamental biquandle from a fundamental quandle, 
and there is a one-to-one correspondence between 
the set of biquandle colorings and that of quandle colorings 
for any classical and surface links, 
where in the statement, 
we can choose any diagram for classical and surface links.
Here we note that 
any left adjoint functor of the functor $\Q$ 
from the category of multiple conjugation biquandles to that of multiple conjugation quandles, 
which we will define in section 4 in this paper, 
has not been defined yet.
Furthermore, 
Ishikawa and Tanaka \cite{ITpre} explained the one-to-one correspondence proved in \cite{Ishikawapre} 
diagrammatically and concretely for classical and surface links.

Ishii, Iwakiri, Jang and Oshiro \cite{IIJO13} introduced a $G$-family of quandles 
to define colorings and invariants for handlebody-links and spatial trivalent graphs.
A multiple conjugation quandle (MCQ) is introduced in \cite{Ishii15} 
as a universal symmetric quandle with a partial multiplication 
to define coloring invariants for handlebody-links, 
where a partial multiplication is an operation used at trivalent vertices.
A $G$-family of biquandles \cite{IN17} and a multiple conjugation biquandle (MCB) \cite{IIKKMOpre,IN17} 
are biquandle versions of those algebraic systems.
However, 
although MCB colorings require more calculation than 
MCQ colorings in general, 
it has not been known whether 
an invariant  obtained from MCB colorings is more effective than 
one obtained from MCQ colorings.
In this paper, 
we partially extend the result in \cite{Ishikawapre,ITpre} to MCQ and MCB colorings 
for handlebody-links and spatial trivalent graphs.
Concretely, 
we show that 
for any handlebody-links and spatial trivalent graphs, 
there is a one-to-one correspondence between 
the set of MCB colorings and that of MCQ colorings diagrammatically (Theorem \ref{1:1 correspondence}).
We also show that 
the set of $G$-family of Aleander biquandles colorings 
is isomorphic to 
that of $G$-family of Alexander quandles colorings 
as modules (Corollary \ref{isomorphic}).

This paper is organized into five sections.
In Section 2, 
we review the definitions of 
a handlebody-link, a spatial trivalent graph 
and the Reidemeister moves of their diagrams.
In Section 3,  
we recall basic notions and facts about quandles and biquandles.
In Section 4, 
we review the definitions of a multiple conjugation quandle (MCQ) and a multiple conjugation biquandle (MCB) 
and define a functor $\Q$ from the category of MCBs to that of MCQs.
Moreover, we introduce colorings for handlebody-links and spatial trivalent graphs 
by using an MCQ and an MCB 
and show that 
for any MCB $X$, 
there is a one-to-one correspondence 
between the set of $X$-colorings 
and that of $\Q(X)$-colorings diagrammatically 
for any handlebody-link and spatial trivalent graph.
In Section 5, 
we introduce colorings for handlebody-links and spatial trivalent graphs by using a $G$-family of quandles and a $G$-family of biquandles.
We discuss the similar correspondence between the sets of colorings by using them.

\section{Preliminaries}
A \emph{handlebody-link} is the disjoint union of handlebodies embedded in the 3-sphere $S^3$ \cite{Ishii08}.
A \emph{handlebody-knot} is a handlebody-link with one component.
In this paper, 
we assume that every component of a handlebody-link 
is of genus at least $1$.
An \emph{$S^1$-orientation} of a handlebody-link is an orientation 
of all genus 1 components of the handlebody-link, 
where an orientation of a solid torus is an orientation of its core $S^1$.
Two $S^1$-oriented handlebody-links are \emph{equivalent} 
if there exists an orientation-preserving self-homeomorphism of $S^3$ 
sending one to the other 
preserving the $S^1$-orientation.

A \emph{spatial trivalent graph} is 
a finite trivalent graph embedded in $S^3$.
In this paper, 
a trivalent graph may have a circle component, 
which has no vertices.
A \emph{Y-orientation} of a spatial trivalent graph is 
a direction of all edges of the graph 
satisfying that every vertex of the graph is both 
the initial vertex of a directed edge 
and the terminal vertex of a directed edge (Figure \ref{Y-orientation}).
For a Y-oriented spatial trivalent graph $K$ 
and an $S^1$-oriented handlebody-link $H$, 
we say that 
$K$ \emph{represents} $H$ 
if $H$ is a regular neighborhood of $K$ 
and the $S^1$-orientation of $H$ agrees with the Y-orientation of $K$.
Then any $S^1$-oriented handlebody-link can be represented by 
some Y-oriented spatial trivalent graph.
We define a \emph{diagram} of an $S^1$-oriented handlebody-link 
by a diagram of a Y-oriented spatial trivalent graph 
representing the handlebody-link.
Then the following theorem holds.

\begin{figure}[htb]
\begin{center}
\includegraphics[width=45mm]{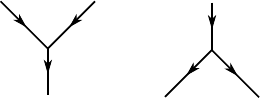}
\end{center}
\caption{Y-orientations.}\label{Y-orientation}
\end{figure}

\begin{theorem}[\cite{Ishii15-2}]\label{Reidemeister moves}
For a diagram $D_i$ of an $S^1$-oriented handlebody-link $H_i$ $(i=1,2)$, 
$H_1$ and $H_2$ are equivalent 
if and only if 
$D_1$ and $D_2$ are related 
by a finite sequence of R1--R6 moves 
depicted in Figure \ref{Reidemeister move} 
preserving Y-orientations, called the Reidemeister moves.
\end{theorem}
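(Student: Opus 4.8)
The plan is to prove both directions by translating the problem about $S^1$-oriented handlebody-links into one about their trivalent spines, and then invoking a Reidemeister-type theorem for spatial graph diagrams together with a spine-change (IH) move. By definition each diagram $D_i$ is a diagram of a Y-oriented spatial trivalent graph $K_i$ whose regular neighborhood, with matching orientation, is $H_i$, so the whole statement can be recast as an equivalence between handlebody-data and spine-data.

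For the sufficiency (``if'') direction, I would verify move by move that each of R1--R6 preserves the equivalence class. The moves R1--R5 are supported in a ball and realize an ambient isotopy of the underlying spatial trivalent graph $K_i$ in $S^3$; since a regular neighborhood is an ambient-isotopy invariant, the associated handlebody-links are equivalent, and because the move is carried out preserving the Y-orientation, the induced $S^1$-orientation is preserved as well. The move R6 (the IH-move) does not come from an ambient isotopy of $K_i$ but replaces one trivalent spine of a handlebody (of genus $\ge 1$) by another; here I would check directly that both spines have the same regular neighborhood up to equivalence, and that the Y-orientation condition at the two affected vertices translates correctly across the move, so that the $S^1$-orientation on each genus-$1$ piece is unchanged.

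For the necessity (``only if'') direction, suppose $H_1$ and $H_2$ are equivalent, and let $K_i$ be the Y-oriented spatial trivalent graph underlying $D_i$. First I would pass from handlebody-equivalence to spatial-graph-equivalence: because $K_i$ is a spine of $H_i$ and $H_1$ is carried to $H_2$ by an orientation-preserving self-homeomorphism of $S^3$, the graphs $K_1$ and $K_2$ become trivalent spines of a single handlebody-link, and any two trivalent spines of a handlebody differ by a finite sequence of IH-moves followed by an ambient isotopy. Next I would apply the Reidemeister theorem for diagrams of spatial trivalent graphs, which converts the ambient isotopy into a finite sequence of R1--R5 on diagrams, while the IH-moves become R6. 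Throughout, I would track the Y-orientation, using the correspondence between the $S^1$-orientation on each genus-$1$ component of $H_i$ and the Y-orientation of the corresponding part of $K_i$, to ensure that every move in the sequence can be chosen to preserve Y-orientations.

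I expect the main obstacle to be the necessity direction, specifically the reduction step that two trivalent spines of the same handlebody are related by IH-moves, and its compatibility with the orientation data. This requires a careful handle-decomposition argument, and one must ensure not merely that $K_1$ and $K_2$ become ambient isotopic after IH-moves, but that the entire sequence can be promoted to one respecting the Y-orientation at every stage. This bookkeeping is most delicate precisely at the vertices where the IH-move is applied and on the genus-$1$ components where the $S^1$-orientation is recorded.
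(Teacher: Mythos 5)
The first thing to note is that the paper contains no proof of this statement: it is imported verbatim from \cite{Ishii15-2} (the unoriented version going back to \cite{Ishii08}), so there is no internal argument to compare yours against. Your outline does reproduce the strategy that underlies the cited work --- sufficiency by checking each of R1--R6 preserves the regular neighborhood and its $S^1$-orientation, necessity by passing to trivalent spines, relating spines by IH-moves, and invoking the Reidemeister theorem for spatial trivalent graph diagrams. As a roadmap it points in the right direction.

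However, as a proof it has a genuine gap: the two pivotal claims in your necessity direction are asserted rather than established, and they \emph{are} the mathematical content of the theorem. First, the claim that any two trivalent spines of a handlebody-link in $S^3$ are related by a finite sequence of IH-moves and ambient isotopy is essentially equivalent to the unoriented version of the theorem itself (the main result of \cite{Ishii08}); citing it as a known fact makes the argument circular unless you give an independent proof, e.g.\ via collapses and expansions of spines or a handle-decomposition argument, which you only gesture at. Second, and more seriously for the statement actually at hand, the Y-orientation tracking is not routine bookkeeping. The unoriented theorem produces a sequence of moves whose intermediate diagrams need not carry --- or even admit --- Y-orientations compatible with the orientations prescribed at the two ends, and the R6 move in Figure \ref{Reidemeister move} is only permitted in forms consistent with Y-orientations. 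Showing that the move sequence can be rechosen so that every intermediate diagram is Y-oriented and every move is orientation-preserving is precisely the contribution of \cite{Ishii15-2}, and it requires arguments beyond the unoriented case (this is why the oriented statement is a separate theorem in a separate paper). You correctly identify this as ``the main obstacle,'' but identifying the obstacle is not the same as overcoming it; as written, the proposal defers exactly the steps that would need to be proven.
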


\begin{figure}[htb]
\begin{center}
\includegraphics[width=125mm]{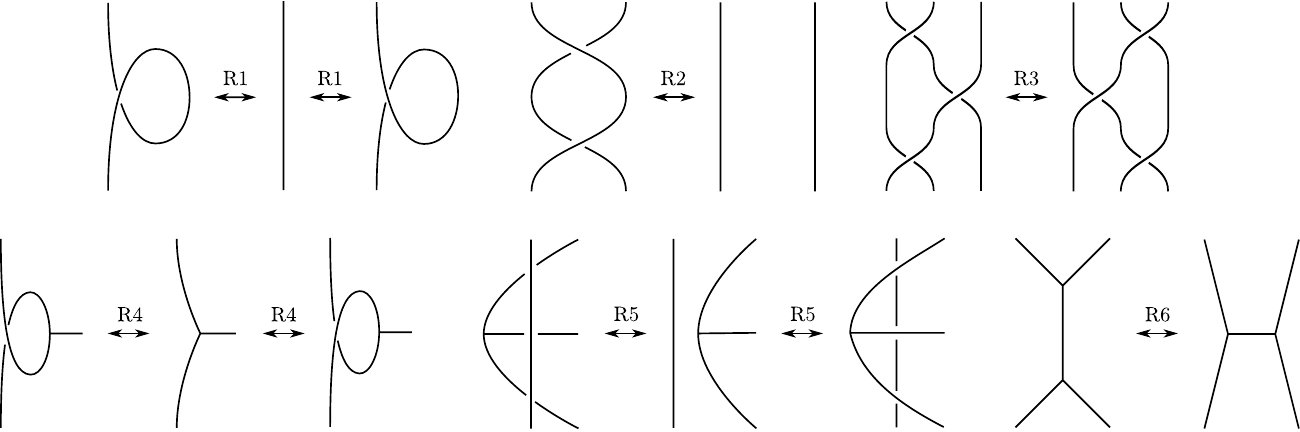}
\caption{The Reidemeister moves for handlebody-links.}\label{Reidemeister move}
\end{center}
\end{figure}

Here we note that 
the R1--R5 moves in Figure \ref{Reidemeister move} are 
the Reidemeister moves for spatial trivalent graphs \cite{Kauffman89,Yetter89}.
Hence we can regard handlebody-links as  a quotient structure of spatial trivalent graphs.

In this paper, 
we denote by $\mathcal{A}(D)$ and $\mathcal{SA}(D)$ 
the set of all arcs of $D$ and that of all semi-arcs of $D$ respectively, 
where a semi-arc is a piece of a curve 
each of whose endpoints is a crossing or a vertex.
An orientation of a (semi-)arc of $D$ 
is also represented by the normal orientation 
obtained by rotating the usual orientation counterclockwise 
by $\pi/2$ on the diagram.
For any $m \in \mathbb{Z}_{\geq 0}$,
we put 
$\mathbb{Z}_m:=\mathbb{Z}/m\mathbb{Z}$.
For an $S^1$-oriented handlebody-link $H$, 
the \emph{reverse} of $H$, denoted $-H$, 
is obtained by reversing the orientations of all genus 1 components, 
and 
the \emph{reflection} of $H$, denoted $H^*$, is the image of $H$ under 
an orientation-reversing self-homeomorphism of $S^3$.
A \emph{split handlebody-link} is a handlebody-link whose exterior is reducible.
For any handlebody-links $H_1$ and $H_2$, 
we denote by $H_1 \sqcup H_2$ the split handlebody-link $H$ 
such that there exists a 2-sphere in $S^3-H$ separating $S^3$ into two 3-balls, 
each of which contains only $H_1$ and $H_2$ respectively.
In this paper, we often omit brackets.
When we omit brackets, 
we apply binary operations from left on expressions, 
except for group operations, 
which we always apply first.
For example, 
we write $a *_1 b *_2 cd *_3 (e *_4 f *_5 g)$ 
for $((a *_1 b) *_2 (cd)) *_3 ((e *_4 f) *_5 g)$ simply, 
where each $*_i$ is a binary operation, 
and $c$ and $d$ are elements of the same group.

\section{Quandles and biquandles}

We recall the definitions of a quandle and a biquandle.

\begin{definition}[\cite{Joyce82, Matveev82}]
A \emph{quandle} is a non-empty set $X$ with a binary operation $* : X \times X \to X$ 
satisfying the following axioms.
\begin{itemize}
\item
For any $x \in X$, 
$x*x=x$.
\item
For any $y \in X$, 
the map $S_y : X \to X$ defined by $S_y(x)=x*y$ is a bijection.
\item
For any $x,y,z \in X$, 
$(x*y)*z=(x*z)*(y*z)$.
\end{itemize}
\end{definition}

We define the \emph{type} of a quandle $X$ by 
\begin{align*}
\type X=\min \{ n \in \mathbb{Z}_{>0} \mid a*^n b=a ~(\text{for any~} a,b \in X) \},
\end{align*}
where we set $a*^i b :=S_b^i(a)$ and $\min \emptyset :=\infty$ 
for any $i \in \mathbb{Z}$, $a,b \in X$ and the empty set $\emptyset$.
Any finite quandle is of finite type.

Let $X$ be an $R[t^{\pm1}]$-module, 
where $R$ is a commutative ring.
For any $a,b \in X$, we define $a * b=ta+(1-t)b$.
Then $X$ is a quandle, called an \emph{Alexander quandle}.

\begin{definition}[\cite{FJK04,FRS95,KR03}]
A \emph{biquandle} is a non-empty set $X$ 
with binary operations $\os, \us : X \times X \to X$ 
satisfying the following axioms.
\begin{itemize}
\item
For any $x \in X$, 
$x \us x=x \os x$.
\item
For any $y \in X$, 
the map $\underline{S}_y : X \to X$ 
defined by $\underline{S}_y(x)=x \us y$ is a bijection.
\item[]
For any $y \in X$, 
the map $\overline{S}_y : X \to X$ 
defined by $\overline{S}_y(x)=x \os y$ is a bijection.
\item[]
The map $S : X \times X \to X \times X$ 
defined by $S(x,y)=(y \os x,x \us y)$ is a bijection.
\item
For any $x,y,z \in X$, 
\begin{align*}
(x \us y) \us (z \us y)=(x \us z) \us (y \os z),\\
(x \us y) \os (z \us y)=(x \os z) \us (y \os z),\\
(x \os y) \os (z \os y)=(x \os z) \os (y \us z).
\end{align*}
\end{itemize}
\end{definition}

We note that 
$(X,*)$ is a quandle 
if and only if 
$(X,*,\os)$ is a biquandle 
with $x \os y=x$.
Let  $(X,\us,\os)$ be a biquandle.
For any $i \in \mathbb{Z}$ and $a,b \in X$, 
we define $a \us^i b := \underline{S}^i_b(a)$ and $a \os^i b := \overline{S}^i_b(a)$.
Then we define two families of binary operations 
$\us^{[n]}, \os^{[n]} : X \times X \to X (n \in \mathbb{Z})$ 
by the equalities: 
\begin{align*}
& a \us^{[0]}b=a,~~ a \us^{[1]}b=a \us b,~~ a \us^{[i+j]}b=(a\us^{[i]}b)\us^{[j]}(b\us^{[i]}b),\\
& a \os^{[0]}b=a,~~ a \os^{[1]}b=a \os b,~~ a \os^{[i+j]}b=(a\os^{[i]}b)\os^{[j]}(b\os^{[i]}b)
\end{align*}
for any $i,j \in \mathbb{Z}$ \cite{IIKKMOpre,IN17}.
Since $a=a \us^{[0]}b=(a\us^{[-1]}b)\us^{[1]}(b\us^{[-1]}b)=(a\us^{[-1]}b)\us(b\us^{[-1]}b)$, 
we have $a \us^{[-1]}b=a\us^{-1}(b\us^{[-1]}b)$ 
and $(b\us^{[-1]}b)\us(b\us^{[-1]}b)=b$, 
where we note that 
$b \us^{[-1]} b$ is the unique element satisfying $(b \us^{[-1]} b) \us (b \us^{[-1]} b)=b$ \cite{IIKKMOpre}.

We define the \emph{type} of a biquandle $X$ by 
\begin{align*}
\type X=\min \{ n \in \mathbb{Z}_{>0} \mid a\us^{[n]}b=a=a\os^{[n]}b ~(\text{for any~} a,b \in X) \},
\end{align*}
where we remind that $\min \emptyset=\infty$ for the empty set $\emptyset$.
Any finite biquandle is of finite type \cite{IN17}.

Let $X$ be an $R[s^{\pm1},t^{\pm1}]$-module, 
where $R$ is a commutative ring.
For any $a,b \in X$, we define $a \us b=ta+(s-t)b$ and $a \os b=sa$.
Then $X$ is a biquandle, called an \emph{Alexander biquandle}.
Any Alexander biquandle with $s=1$ 
coincides with an Alexander quandle.
For an Alexander biquandle $X$, 
we have $a\us^{[n]}b=t^na+(s^n-t^n)b$ 
and $a\os^{[n]}b=s^na$ 
for any $a,b \in X$.

For any biquandle $(X,\us,\os)$, 
we have a quandle $(X,*)$, denoted by $\Q(X)$, 
by defining $x*y=x \us y \os^{-1}y$ for any $x,y \in X$ \cite{Ashihara14}.
There is a one-to-one correspondence between 
the set of $X$-colorings and that of $\Q(X)$-colorings 
for any classical link \cite{Soloviev00} and surface link \cite{Ishikawapre,ITpre}.

For any Alexander biquandle $X$, 
which is an $R[s^{\pm1},t^{\pm1}]$-module for some commutative ring $R$, 
$\Q(X)$ is the Alexander quandle, which is the $R[(s^{-1}t)^{\pm1}]$-module.
That is, for any $x,y \in \Q(X)$, 
it follows that 
$x*y=s^{-1}tx+(1-s^{-1}t)y$.

\begin{proposition}\label{type}
For any Alexander biquandle $X$ which is of finite type, 
$\type X$ is divisible by $\type \Q(X)$.
\end{proposition}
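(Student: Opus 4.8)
The plan is to translate both type quantities into annihilation conditions for the module action of $s$ and $t$ on $X$, and then to exploit that the set of periods of a module automorphism is a subgroup of $\mathbb{Z}$.

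First I would compute $\type X$. Since $a \us^{[n]} b = t^n a + (s^n - t^n) b$ and $a \os^{[n]} b = s^n a$, the requirement $a \os^{[n]} b = a$ for all $a,b$ is exactly $(s^n - 1) X = 0$, while $a \us^{[n]} b = a$ for all $a,b$ yields $(t^n - 1) X = 0$ (set $b = 0$ and let $a$ range over $X$) together with $(s^n - t^n) X = 0$, the latter being automatic once the former two hold. Hence
\begin{align*}
\type X = \min \{\, n \in \mathbb{Z}_{>0} \mid (s^n - 1) X = 0 \text{ and } (t^n - 1) X = 0 \,\}.
\end{align*}

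Next I would compute $\type \Q(X)$. Writing $u = s^{-1} t$, the quandle operation of $\Q(X)$ is $x * y = u x + (1 - u) y$, so an easy induction gives $a *^n b = u^n a + (1 - u^n) b$. Thus $a *^n b = a$ for all $a,b$ is equivalent to $(u^n - 1) X = 0$, that is, to $u^n = \id_X$, and therefore $\type \Q(X) = \min \{\, n \in \mathbb{Z}_{>0} \mid u^n = \id_X \,\}$ is the order of multiplication by $u$ as an automorphism of $X$.

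The decisive observation is that $B := \{\, n \in \mathbb{Z} \mid u^n = \id_X \,\}$ is the kernel of the homomorphism $\mathbb{Z} \to \mathrm{Aut}(X)$, $n \mapsto u^n$, hence a subgroup of $\mathbb{Z}$. Setting $N = \type X$, which is finite by hypothesis, the conditions $(s^N - 1) X = 0$ and $(t^N - 1) X = 0$ give $(t^N - s^N) X = 0$, and since $s$ is a unit this reads $(u^N - 1) X = 0$, i.e. $N \in B$. In particular $B \neq \{0\}$, so $B = M \mathbb{Z}$ with $M = \type \Q(X) < \infty$, and $N \in B = M\mathbb{Z}$ forces $\type \Q(X) = M$ to divide $N = \type X$, which is the assertion.

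I do not anticipate a serious obstacle. The only point needing care is that $R$ is merely a commutative ring, so every step must be phrased as annihilation of the whole module $X$ (taking $b = 0$ and letting $a$ range over $X$) rather than as cancellation or invertibility of individual elements; and it is exactly the subgroup structure of $B$ that upgrades the immediate inequality $\type \Q(X) \le \type X$ to the claimed divisibility.
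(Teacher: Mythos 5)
Your proposal is correct and takes essentially the same route as the paper: both reduce the hypothesis $\type X = m < \infty$ to $s^m = t^m = 1$ on $X$, hence $(s^{-1}t)^m = \id_X$, and then deduce divisibility by the minimality of $\type \Q(X)$. The only cosmetic difference is that you package the final step as "the period set $B$ is a subgroup of $\mathbb{Z}$, hence $B = M\mathbb{Z}$," while the paper carries out the underlying Euclidean division $m = m'l_1 + l_2$ explicitly and derives a contradiction from $0 < l_2 < m'$ — these are the same argument.
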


\begin{proof}
Put $m=\type X$ and $m'=\type \Q(X)$.
Then it follows that 
$x \us^{[m]}y=t^mx+(s^m-t^m)y=x$ and $x \os^{[m]} y=s^m x=x$ 
for any $x,y \in X$.
Hence we have $s^m=t^m=1$, that is, $x *^m y=s^{-m}t^mx+(1-s^{-m}t^m)y=x$ for any $x,y \in \Q(X)$.
Therefore we have $m' \leq m$.
We assume that $m=m'l_1+l_2$ for some $l_1,l_2 \in \mathbb{Z}_{\geq 0}$ such that $0 < l_2 <m'$.
Then we have $x *^m y=s^{-l_2}t^{l_2}x+(1-s^{-l_2}t^{l_2})y=x$, 
which contradicts to $m'=\type \Q (X)$.
Therefore we obtain $m=m'l_1$ for some $l_1 \in \mathbb{Z}_{\geq 0}$.
\end{proof}

Here we see two examples.
Let $X$ be the Alexander biquandle $\mathbb{Z}[s^{\pm 1},t^{\pm 1}]/(s-t)$.
Then we have $\type X=\infty$ and $\type \Q(X)=1$.
Next, 
let $X$ be the Alexander biquandle $\mathbb{Z}[s^{\pm 1},t^{\pm 1}]/(s+t,t^4-1)$.
Then we have $\type X=4$ and $\type \Q(X)=2$.

\section{A relationship between MCQ/MCB colorings}

In this section, 
we recall the definitions of a multiple conjugation quandle (MCQ) and a multiple conjugation biquandle (MCB) 
and define a functor $\Q$ from the category of MCBs to that of MCQs.
We prove that 
for any MCB $X$, 
there is a one-to-one correspondence between 
the set of $X$-colorings and that of $\Q(X)$-colorings 
for any $S^1$-oriented handlebody-link.

Firstly, we review the definition of a multiple conjugation quandle (MCQ).

\begin{definition}[\cite{Ishii15}]
A \emph{multiple conjugation quandle (MCQ)} $X$ 
is the disjoint union of groups $G_{\lambda} (\lambda \in \Lambda)$ 
with a binary operation $* : X \times X \to X$ 
satisfying the following axioms.
\begin{itemize}
\item
For any $a,b \in G_\lambda$, 
$a*b = b^{-1}ab$.
\item
For any $x \in X$ and $a,b \in G_\lambda$, 
$x*e_\lambda = x$ and $x*ab=(x*a)*b$, 
where $e_\lambda$ is the identity of $G_\lambda$.
\item
For any $x,y,z \in X$, 
$(x*y)*z=(x*z)*(y*z)$.
\item
For any $x \in X$ and $a,b \in G_\lambda$, 
$ab*x=(a*x)(b*x)$, 
where $a*x, b*x \in G_\mu$ for some $\mu \in \Lambda$.
\end{itemize}
\end{definition}

We remark that an MCQ itself is a quandle.
Let $X=\bigsqcup_{\lambda \in \Lambda}G_\lambda$ and $Y=\bigsqcup_{\mu \in M}G_\mu$ be MCQs.
An \emph{MCQ homomorphism} $\phi : X \to Y$ is a map 
from $X$ to $Y$ satisfying $\phi(x*y)=\phi(x)*\phi(y)$ for any $x,y \in X$ 
and $\phi(ab)=\phi(a)\phi(b)$ for any $\lambda \in \Lambda$ and $a,b \in G_\lambda$.
We call a bijective MCQ homomorphism an \emph{MCQ isomorphism}.
$X$ and $Y$ are \emph{isomorphic} 
if there exists an MCQ isomorphism from $X$ to $Y$.
There is a category $\MCQ$ of MCQs, whose objects are MCQs and whose morphisms are MCQ homomorphisms.

Next, we review the definition of a multiple conjugation biquandle (MCB).
Let $X$ be the disjoint union of groups $G_{\lambda} (\lambda \in \Lambda)$.
We denote by $G_a$ the group $G_\lambda$ containing $a \in X$.
We also denote by $e_\lambda$ the identity of $G_\lambda$.
Then the identity of $G_a$ is denoted by $e_a$ for any $a \in X$.

\begin{definition}[\cite{IIKKMOpre,IN17}]
A \emph{multiple conjugation biquandle (MCB)} $X$ 
is the disjoint union of groups $G_{\lambda} (\lambda \in \Lambda)$ 
with binary operations $\us, \os : X \times X \to X$ 
satisfying the following axioms.
\begin{itemize}
\item
For any $x,y,z \in X$, 
\begin{align*}
(x \us y) \us (z \us y)=(x \us z) \us (y \os z),\\
(x \us y) \os (z \us y)=(x \os z) \us (y \os z),\\
(x \os y) \os (z \os y)=(x \os z) \os (y \us z).
\end{align*}
\item
For any $a,x \in X$, 
$\us x : G_a \to G_{a \us x}$ and $\os x : G_a \to G_{a \os x}$ 
are group homomorphisms.
\item
For any $x \in X$ and $a,b \in G_\lambda$, 
\begin{align*}
& x \us ab =(x \us a) \us (b \os a), &x \us e_\lambda = x,\\
& x \os ab =(x \os a) \os (b \os a), &x \os e_\lambda = x,\\
& a^{-1}b \os a= ba^{-1} \us a.
\end{align*}
\end{itemize}
\end{definition}

We remark that an MCB itself is a biquandle.
Let $X=\bigsqcup_{\lambda \in \Lambda}G_\lambda$ and $Y=\bigsqcup_{\mu \in M}G_\mu$ be MCBs.
An \emph{MCB homomorphism} $\phi : X \to Y$ is a map 
from $X$ to $Y$ satisfying $\phi(x\us y)=\phi(x)\us \phi(y)$ and $\phi(x\os y)=\phi(x)\os \phi(y)$ for any $x,y \in X$ 
and $\phi(ab)=\phi(a)\phi(b)$ for any $\lambda \in \Lambda$ and $a,b \in G_\lambda$.
We call a bijective MCB homomorphism an \emph{MCB isomorphism}.
$X$ and $Y$ are \emph{isomorphic} 
if there exists an MCB isomorphism from $X$ to $Y$.
There is a category $\MCB$ of MCBs, whose objects are MCBs and whose morphisms are MCB homomorphisms.

\begin{definition}
We define a functor $\Q$ from $\MCB$ to $\MCQ$ by 
$\Q((X,\us,\os))=(X,*)$ with $x*y=x \us y \os^{-1} y$ for any MCB $(X,\us,\os)$ 
and $\Q(\phi)=\phi$ for any MCB homomorphism $\phi$.
\end{definition}

In the following, 
we see that the functor $\Q$ is well-defined.

\begin{proposition}\label{MCB to MCQ}
The functor $\Q : \MCB \to \MCQ$ is well-defined.
\end{proposition}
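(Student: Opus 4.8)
The plan is to check the two conditions that ``well-defined'' demands: that $\Q(X)=(X,*)$ is genuinely an MCQ for every MCB $X$, and that an MCB homomorphism $\phi$ is automatically an MCQ homomorphism for the operation $*$. Preservation of identities and composition is then immediate, since $\Q$ is the identity on the underlying sets and on morphisms and merely renames the operation.

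For the object part I would verify the four MCQ axioms in turn. Because an MCB is a biquandle and $(X,*)$ with $x*y=x\us y\os^{-1}y$ is exactly the quandle $\Q(X)$ produced in Section 3, the self-distributive law $(x*y)*z=(x*z)*(y*z)$ comes for free from the quandle structure; likewise $x*e_\lambda=x$ is immediate, since $x\us e_\lambda=x$ and $x\os e_\lambda=x$ give $x*e_\lambda=x\os^{-1}e_\lambda=x$. The substantive content is the three group-compatibility axioms. The key computation is the conjugation axiom $a*b=b^{-1}ab$ for $a,b\in G_\lambda$. Here I would start from the MCB relation $b^{-1}a\os b=ab^{-1}\us b$, expand both sides with the group-homomorphism axioms for $\os b$ and $\us b$, and use the biquandle identity $b\us b=b\os b$ to rewrite it as $a\us b=(b\os b)^{-1}(a\os b)(b\os b)$. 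Applying $\os^{-1}b$ and using $z\os b\os^{-1}b=z$ together with $(b\os b)\os^{-1}b=b$ then collapses the right-hand side to $b^{-1}ab$. The remaining axioms $x*ab=(x*a)*b$ and $ab*x=(a*x)(b*x)$ follow by analogous manipulations: for the former I would use $x\us ab=(x\us a)\us(b\os a)$ and $x\os ab=(x\os a)\os(b\os a)$ (the latter yielding $\overline{S}_{ab}=\overline{S}_{b\os a}\circ\overline{S}_a$, hence a factorization of $\os^{-1}(ab)$), and for the latter the homomorphism property of $\us x$ on $G_\lambda$ followed by that of $\os^{-1}x$.

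The main obstacle, and the technical point on which every one of these computations silently depends, is that $\os^{-1}x$ and $\us^{-1}x$ are themselves group homomorphisms on the relevant components. This amounts to showing that the biquandle bijections $\overline{S}_x$ and $\underline{S}_x$ restrict, on each group $G_a$, to group isomorphisms $G_a\to G_{a\os x}$ and $G_a\to G_{a\us x}$. Injectivity is free, since $\overline{S}_x$ and $\underline{S}_x$ are bijections of $X$, so the real work is to see that these restrictions are onto the asserted components and that the inverse maps send a single component into a single component. I would isolate this as a preliminary lemma and prove it first, because it is also what guarantees that $a*x$ and $b*x$ lie in a common group $G_\mu$, as the fourth MCQ axiom requires.

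Once that lemma is in place the object computations above go through without further difficulty, and the morphism part is routine. Since $x*y$ is assembled from $\us$, $\os$ and $\os^{-1}$, and since $\phi(x\os y)=\phi(x)\os\phi(y)$ forces $\phi(x\os^{-1}y)=\phi(x)\os^{-1}\phi(y)$, any MCB homomorphism $\phi$ satisfies $\phi(x*y)=\phi(x)*\phi(y)$, while it preserves the group multiplications by definition; hence $\Q(\phi)=\phi$ is an MCQ homomorphism, completing the verification that $\Q$ is well-defined.
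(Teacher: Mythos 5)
Your proposal follows the same skeleton as the paper's proof: verify the four MCQ axioms directly, import self-distributivity from the fact that $(X,*)$ with $x*y=x\us y\os^{-1}y$ is a quandle (the result of Ashihara recalled in Section 3), and dispose of the morphism and functoriality parts as routine. Two differences are worth recording. First, you isolate as a preliminary lemma the fact that $\us x$ and $\os x$ restrict to group \emph{isomorphisms} $G_a\to G_{a\us x}$ and $G_a\to G_{a\os x}$, equivalently that $\os^{-1}x$ carries $G_{a\os x}$ homomorphically onto $G_a$. The paper uses this silently, in the verification of $ab*x=(a*x)(b*x)$ and in making sense of the clause ``$a*x,b*x\in G_\mu$''; your formulation of what must be checked (surjectivity onto the named component, and that the inverse respects components) is exactly right, and the missing proof is short: each restriction is a group homomorphism, hence sends $e_\lambda$ to the identity of its target component, so injectivity of $\overline{S}_x$ forces the induced map on components to be injective, and surjectivity of $\overline{S}_x$ then forces each restriction to be onto. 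Making this explicit is a genuine improvement in rigor over the paper. (Your conjugation computation, unlike the paper's, also leans on this lemma; the paper instead substitutes $b$, $ab$ into the axiom $a^{-1}b\os a=ba^{-1}\us a$ to get $b^{-1}ab\os b=a\us b$ and cancels $\os^{-1}b$ as a bare map, which needs nothing.)

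The one place your sketch under-delivers is the axiom $x*ab=(x*a)*b$, which is the hardest computation in the paper's proof. Your ingredients are correct but not sufficient as stated: the factorization $\overline{S}_{ab}^{-1}=\overline{S}_a^{-1}\circ\overline{S}_{b\os a}^{-1}$ together with $x\us ab=(x\us a)\us(b\os a)$ only yields $x*ab=\bigl[(x\us a)*(b\os a)\bigr]\os^{-1}a$, and getting from there to $(x*a)*b$ is not a formal consequence of the expansion axioms. One still needs the biquandle exchange laws to enter, e.g.\ via the identity $(y*z)\os a=(y\os a)*(z\os a)$ (a consequence of $(x\os y)\os(z\os y)=(x\os z)\os(y\us z)$), which gives $(x\us a)*(b\os a)=\bigl[(x\us a\os^{-1}a)*b\bigr]\os a$, after which the outer $\os^{-1}a$ cancels. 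The paper does the corresponding work with a different factorization, $x\os^{-1}ab=x\os^{-1}(a\us b)\os^{-1}b$ (proved by writing $ab=b(a\us b\os^{-1}b)$), followed by rewriting $x\us a=(x\us a\os^{-1}a)\os a$ and applying the exchange law $(x\us y)\os(z\us y)=(x\os z)\us(y\os z)$. So your plan is completable and its route is legitimately different in this step, but the phrase ``analogous manipulations'' conceals precisely the point where the exchange laws are indispensable; without that step the verification does not close.
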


\begin{proof}
Let $X=\bigsqcup_{\lambda \in \Lambda}G_{\lambda}$ be an MCB.
At first, since $a^{-1}b \os a = ba^{-1} \us a$ for any $a,b \in G_{\lambda}$, 
we have $a*b=a \us b \os^{-1} b= b^{-1}ab$.
Second, for any $x \in X$ and $a,b \in G_\lambda$,
$x \os^{-1} ab=x \os^{-1} (a \us b) \os^{-1}b$ 
since 
\begin{align*}
x \os^{-1} (a \us b) \os^{-1} b \os ab 
&= x \os^{-1} (a \us b) \os^{-1} b \os b(a \us b \os^{-1} b)\\
&= (x \os^{-1} (a \us b) \os^{-1} b \os b) \os (a \us b \os^{-1} b \os b)\\
&= x \os^{-1} (a \us b) \os (a \us b)\\
&= x.
\end{align*}
Hence we have 
\begin{align*}
x*ab
&= x \us ab \os^{-1} ab\\
&= (x \us a) \us (b \os a) \os^{-1} (a \us b) \os^{-1} b\\
&= ((x \us a \os^{-1} a) \os a) \us (b \os a) \os^{-1} (a \us b) \os^{-1} b\\
&= ((x \us a \os^{-1} a) \us b) \os (a \us b) \os^{-1} (a \us b) \os^{-1} b\\
&= x \us a \os^{-1} a \us b \os^{-1} b\\
&= (x*a)*b.
\end{align*}
Furthermore, we can easily check that 
$x*e_\lambda= x$.
Third, for any $x,y,z \in X$, 
we obtain $(x*y)*z=(x*z)*(y*z)$ 
since $\Q(X)$ is a quandle \cite{Ashihara14}.
Finally, for any $x \in X$ and $a,b \in G_\lambda$,
\begin{align*}
ab*x
&=ab \us x \os^{-1} x \\
&= (a \us x \os^{-1} x)(b \us x \os^{-1} x) \\
&= (a*x)(b*x) 
\end{align*}
since 
$\us x : G_a \to G_{a \us x}$ and $\os x : G_a \to G_{a \os x}$ are group homomorphisms.
Therefore $\Q(X)$ is an MCQ.

On the other hand, 
for any MCB homomorphism $\phi : X \to Y$ and $x,y \in X$, 
we have 
\begin{align*}
\Q(\phi)(x*y) 
&= \phi(x*y)\\
&= \phi(x \us y \os^{-1}y)\\
&= \phi(x) \us \phi(y) \os^{-1} \phi(y)\\
&= \phi(x)*\phi(y)\\
&=\Q(\phi)(x)*\Q(\phi)(y).
\end{align*}
Hence $\Q(\phi)$ is an MCQ homomorphism from $\Q(X)$ to $\Q(Y)$.
Furthermore it is clear that 
$\mathcal{Q}(\id_X)=\id_{\mathcal{Q}(X)}$ 
and $\mathcal{Q}(\psi \circ \phi)=\mathcal{Q}(\psi) \circ \mathcal{Q}(\phi)$.
This completes the proof.
\end{proof}

Let $X=\bigsqcup_{\lambda \in \Lambda} G_\lambda$ be an MCQ (resp. MCB) 
and let $D$ be a diagram of an $S^1$-oriented handlebody-link $H$.
An \emph{$X$-coloring} of $D$ is a map 
$C : \mathcal{A}(D) \to X$ (resp. $\mathcal{SA}(D) \to X$) satisfying the conditions 
depicted in Figure \ref{MCQ-coloring} (resp. Figure \ref{MCB-coloring}) 
at each crossing and vertex.
We denote by $\col_X(D)$ 
the set of all $X$-colorings of $D$.

\begin{figure}[htb]
\begin{center}
\includegraphics[width=75mm]{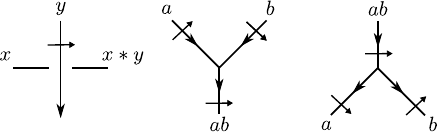}
\end{center}
\caption{An MCQ-coloring of $D$.}\label{MCQ-coloring}
\end{figure}

\begin{figure}[htb]
\begin{center}
\includegraphics[width=115mm]{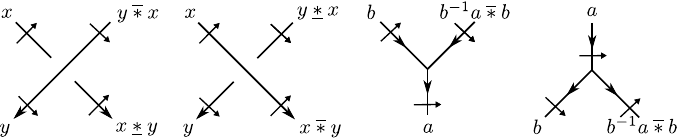}
\end{center}
\caption{An MCB-coloring of $D$.}\label{MCB-coloring}
\end{figure}

\begin{proposition}[\cite{Ishii15,IIKKMOpre,IN17}]\label{MCQ,MCB inv.}
Let $X=\bigsqcup_{\lambda \in \Lambda} G_\lambda$ be an MCQ or MCB 
and let $D$ be a diagram of an $S^1$-oriented handlebody-link $H$.
Let $D'$ be a diagram obtained by applying one of Reidemeister moves to the diagram $D$ once.
For an $X$-coloring $C$ of $D$, 
there is a unique $X$-coloring $C'$ of $D'$ which coincides with $C$ 
except near the point where the move is applied.
\end{proposition}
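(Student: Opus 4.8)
The plan is to verify the statement move by move for each of R1--R6, handling the MCQ case and the MCB case in parallel. In every case I would first note that the conclusion is local: outside a small disk $B$ in which the move is performed, the diagrams $D$ and $D'$ agree, so any $C'$ required to coincide with $C$ away from $B$ is forced to assign the colors of $C$ to each (semi-)arc meeting $\partial B$. The content of the proposition is then that these prescribed boundary colors extend in exactly one way to a coloring of the part of $D'$ inside $B$. Thus for each move I would (i) read off the boundary data coming from $C$, (ii) solve the coloring conditions at the crossings and vertices inside $B$ to obtain candidate colors on the interior (semi-)arcs of $D'$, and (iii) check both that this candidate satisfies every coloring condition of $D'$ and that it is the only possibility.

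For the moves R1, R2, R3, which involve only classical crossings, this reduces to the corresponding statement for quandles and biquandles. Since an MCQ is itself a quandle and an MCB is itself a biquandle (as remarked just after each definition), existence and uniqueness of $C'$ follow from the standard coloring axioms: the first axiom ($x*x=x$, resp.\ $x \us x = x \os x$) handles R1; the bijectivity of the column maps $S_y$ (resp.\ $\underline{S}_y$, $\overline{S}_y$ and the map $S$) handles R2 and simultaneously yields uniqueness by letting one solve for the colors on the interior strands; and the self-distributive axiom (resp.\ the three MCB identities) handles R3.

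For the remaining moves, which involve a trivalent vertex, the new ingredient is the group structure carried by the edges: at a vertex two edges of a common $G_\lambda$ merge into their product, and the color of an outgoing edge must match whether one multiplies first and then transports across a strand, or transports and then multiplies. Here I would use the MCQ axioms $x * ab = (x*a)*b$, $x * e_\lambda = x$ and $ab * x = (a*x)(b*x)$, and in the MCB case their analogues $x \us ab = (x \us a) \us (b \os a)$, $x \os ab = (x \os a) \os (b \os a)$, the relation $a^{-1}b \os a = ba^{-1} \us a$, together with the fact that $\us x$ and $\os x$ restrict to group homomorphisms $G_a \to G_{a \us x}$ and $G_a \to G_{a \os x}$. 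Each such move corresponds to exactly one of these compatibility conditions, so that the two readings of the diagram agree; uniqueness again comes from the bijectivity of the operations and the invertibility of group multiplication.

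I expect the main obstacle to be the vertex moves in the MCB case. Unlike the quandle setting, the biquandle operations are two-sided and it is \emph{semi}-arcs rather than arcs that are colored, so at each of the two strands emanating from a vertex one must track both the $\us$-label and the $\os$-label and then verify that the merging and splitting prescribed by group multiplication is compatible with all three MCB identities at once. In particular, the relation $a^{-1}b \os a = ba^{-1} \us a$ is precisely what reconciles the two ways of reading a vertex, and carrying out the full system of conditions without a gap is the delicate part of the argument.
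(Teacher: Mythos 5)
Your plan is the right one, but note that this paper never proves the proposition at all: it is imported from \cite{Ishii15,IIKKMOpre,IN17}, and those references prove it exactly as you propose. The MCQ/MCB axioms were extracted from the moves R1--R6 in the first place, so the proof there is the local, move-by-move verification you describe, with uniqueness coming from invertibility of the operations and of the group multiplication, and with the partial-multiplication axioms carrying the vertex moves. In that sense your outline reproduces the standard argument rather than diverging from it.

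One step of your plan is, however, stated too loosely to go through as written. In the MCB case, invariance under R1 is \emph{not} handled by the axiom $x \us x = x \os x$ alone: since it is semi-arcs that are colored, the coloring condition at a kink forces one to solve an equation of the form $y \us y = x$ (or $y = x \us y$, depending on the sign and orientation of the kink) for the color $y$ of the loop semi-arc, and the existence and uniqueness of this solution is a derived fact about biquandles, not an axiom. This is precisely what the present paper is invoking in Section 3 when it remarks that $b \us^{[-1]} b$ is the unique element satisfying $(b \us^{[-1]} b) \us (b \us^{[-1]} b) = b$, citing \cite{IIKKMOpre}. So when you carry out your steps (ii)--(iii) for R1 you will need that lemma, or a proof of it from the bijectivity of $\underline{S}_y$, $\overline{S}_y$ and $S$; the delicate part of the argument is therefore slightly larger than the vertex moves you single out. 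The rest of your axiom-to-move dictionary, including the role of $a^{-1}b \os a = ba^{-1} \us a$ and of the homomorphism conditions $\us x, \os x : G_a \to G_{a \us x}, G_{a \os x}$ at vertices, matches the proofs in the cited references.
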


By this proposition, 
the cardinality of $X$-colorings of $D$ is an invariant of $H$.

Let $D$ and $D'$ be diagrams of $S^1$-oriented handlebody-links $H$ and $H'$ respectively.
In the following, 
we define diagrams $-D,D^v,D^h,D \sqcup D'$ and $W(D)$ (see Figure \ref{diagrams}).
We denote by $-D$ and $D^v$ the diagrams of $-H$ and $H^*$ obtained from $D$ 
by reversing the orientations of all (semi)-arcs 
and switching all crossings respectively.
We can regard that $D$ is depicted in an $xy$-plane.
Let $\iota$ be the involution $(x,y) \mapsto (-x,y)$.
Then we define the diagram $D^h$ of $H^*$ by $D^h= \iota (D)$.
We regard $\iota$ as the map from $\mathcal{A}(D)$ to $\mathcal{A}(D^h)$ 
(or $\mathcal{SA}(D)$ to $\mathcal{SA}(D^h)$).

An $S^1$-oriented handlebody-link diagram in $S^2$ is a \emph{split diagram} 
if there is a loop in the exterior of the diagram separating $S^2$ into two disks each containing part of it.
We denote by $D \sqcup D'$ the split diagram of $H \sqcup H'$ 
such that $D$ and $D'$ represent $H$ and $H'$ respectively.
We denote by $W(D)$ the diagram of the $S^1$-oriented handlebody-link $H \sqcup -H^*$ 
obtained from $D \sqcup -D^v$ 
by sliding $-D^v$ under $D$ and shifting it slightly to the normal orientations of all (semi-)arcs of $D$.

\begin{figure}[htb]
\begin{center}
\includegraphics[width=115mm]{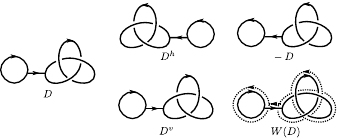}
\end{center}
\caption{Diagrams $D,-D,D^v,D^h$ and $W(D)$.}\label{diagrams}
\end{figure}

Let $X$ be an MCB.
We note here that $\mathcal{SA}(-D)=\mathcal{SA}(D)$.
For any $C \in \col_X(D)$, 
we define $C^* \in \col_X(-D^h)$ by $C^*=C \circ \iota$ 
as shown in Figure \ref{reflection coloring ex}, 
where each $x_i$ is an element of $X$.
We note that the $X$-coloring $C^*$ is shown in Figure \ref{reflection coloring} 
at each crossing and vertex.
We define $C \sqcup C^* \in \col_X(D \sqcup -D^h)$ by 
$(C \sqcup C^*)|_{\mathcal{SA}(D)}=C$ and $(C \sqcup C^*)|_{\mathcal{SA}(-D^h)}=C^*$.
We set $\col^\sqcup_X(D \sqcup -D^h) :=\{ C \sqcup C^* \mid C \in \col_X(D) \}$.
We denote by $\col^W_X(W(D))$ 
the set of $X$-colorings of $W(D)$ satisfying the conditions depicted in Figure \ref{double coloring} 
at each crossing and vertex.

\begin{figure}[htb]
\begin{center}
\includegraphics[width=80mm]{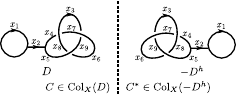}
\end{center}
\caption{Colorings $C$ and $C^*$.}\label{reflection coloring ex}
\end{figure}

\begin{figure}[htb]
\begin{center}
\includegraphics[width=125mm]{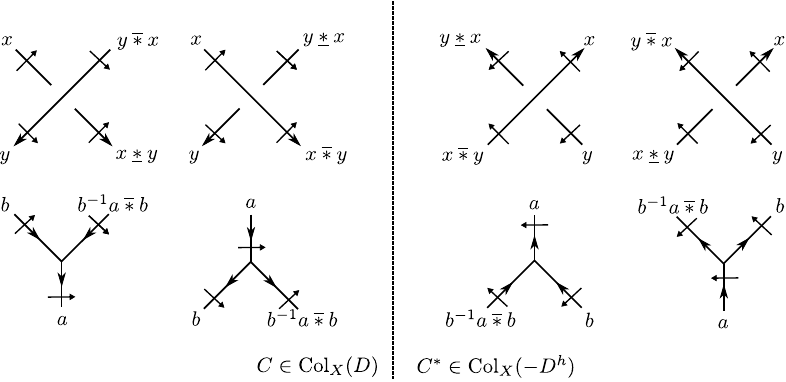}
\end{center}
\caption{The well-definedness of $C^* \in \col_X(-D^h)$.}\label{reflection coloring}
\end{figure}

\begin{figure}[htb]
\begin{center}
\includegraphics[width=120mm]{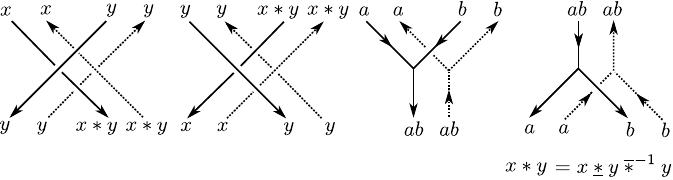}
\end{center}
\caption{The coloring conditions of $\col_X^W(W(D))$.}\label{double coloring}
\end{figure}

\begin{lemma}\label{braid coloring}
Let $X$ be an MCB. 
For the $X$-coloring depicted in Figure \ref{braid_coloring_proof3}, 
where $x_i$, $x_i'$, $y_i$, $y_i'$, $z_i$, $z_i'$, $w_i$ and $w_i'$ are elements of $X$ for any $i$, 
it follows that $(x_1,\ldots, x_l)=(x'_1,\ldots, x'_l)$ if and only if $(y_1,\ldots, y_l)=(y'_1,\ldots, y'_l)$.
\end{lemma}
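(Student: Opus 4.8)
The two directions of the equivalence come from a single source, namely that each crossing of a biquandle, hence of an MCB since an MCB is itself a biquandle, induces a bijection on the colors it carries: the maps $\underline{S}_y$, $\overline{S}_y$ and $S(x,y)=(y\os x,x\us y)$ are all bijective. The plan is to read the braid of Figure \ref{braid_coloring_proof3} from top to bottom as a stack of elementary pieces, each a single crossing between two adjacent strands, and to track how the coincidence of the colors on the two parallel tracks of the diagram propagates through each piece. First I would fix notation so that at the top level the two tracks carry the tuples $(x_1,\dots,x_l)$ and $(x_1',\dots,x_l')$, at the intermediate levels obtained by peeling off crossings they carry the tuples built from the $z_i,z_i'$ and $w_i,w_i'$, and at the bottom level they carry $(y_1,\dots,y_l)$ and $(y_1',\dots,y_l')$.

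The core step is an induction on the number of crossings, whose invariant is the equality, at a given horizontal level, of the entire color tuple carried by the first track and that carried by the second. At a single crossing the outgoing pair of colors is a function of the incoming pair (by the formula defining $S$), so if the two tracks enter the crossing with identical colors on both strands they leave it with identical colors; hence equality at one level forces equality at the next, and iterating from the top yields $(x_1,\dots,x_l)=(x_1',\dots,x_l') \Rightarrow (y_1,\dots,y_l)=(y_1',\dots,y_l')$. For the converse I would run the same induction from the bottom upward using the inverse maps $S^{-1}$, $\underline{S}_y^{-1}$, $\overline{S}_y^{-1}$, which exist precisely because the biquandle axioms declare the corresponding maps to be bijections; this gives $(y_1,\dots,y_l)=(y_1',\dots,y_l') \Rightarrow (x_1,\dots,x_l)=(x_1',\dots,x_l')$.

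The hard part will be that a crossing operation mixes the two strands passing through it, so equality cannot be propagated one strand at a time: the induction has to be carried on entire horizontal levels, maintaining the equality of all colors at a level as a single invariant, and I must make sure the chosen decomposition into elementary crossings touches every strand so that no color escapes control. A companion technicality is the bookkeeping of normal orientations and over/under data, which decides whether $\us$, $\os$, or one of their inverses acts at a given crossing; this information is exactly what Figures \ref{MCB-coloring} and \ref{double coloring} record, and once it is fixed consistently for both tracks the bijectivity built into the biquandle axioms closes both implications simultaneously.
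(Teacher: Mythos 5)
Your plan breaks down exactly at the point you flag as ``the hard part'' and do not resolve. The invariant you propose to propagate --- equality, at each horizontal level, of the tuple carried by the unprimed track and the tuple carried by the primed track --- is \emph{not} preserved crossing-by-crossing, and ``the outgoing pair of colors is a function of the incoming pair'' cannot make it so. In Figure \ref{braid_coloring_proof3} the two families are not disjoint parallel copies of one pattern: the primed strands cross the unprimed ones (this is forced by the intended application, where the copy $-D^v$ is pulled out from under $D$), and at such a crossing the over-strand is updated by $\os$ while the under-strand is updated by $\us$, so the two tracks evolve by \emph{different} operations. Concretely, if an unprimed strand carrying $a$ passes over a primed strand carrying $b$ that is not its partner, afterwards the unprimed strand carries $a \os b$ while its positional partner in the primed family, untouched by this crossing, still carries $a$; your level-wise equality is already destroyed. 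Even for partner strands, concluding that $a \os a$ equals $a \us a$ requires the axiom $x \us x = x \os x$, which your argument never invokes. Worse, an argument that uses only functionality downward and bijectivity of $\underline{S}_y$, $\overline{S}_y$, $S$ upward, and is otherwise insensitive to the precise crossing pattern, would prove a false statement: take two unprimed strands colored $p,q$ crossing over the two \emph{non-partner} primed strands colored $q,p$ respectively; the top tuples agree, but the bottom tuples are $(p \os q,\, q \os p)$ and $(p \us q,\, q \us p)$, which differ in any Alexander biquandle with $s \neq t$. So any correct proof must use the specific over/under and orientation pattern of Figure \ref{braid_coloring_proof3}, together with the MCB axioms, and not merely bijectivity of the crossing maps.

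The paper's proof is organized around precisely that structure, with a different induction: on the number of strands $l$, not on the number of crossings. Assuming $(x_1,\ldots,x_l)=(x_1',\ldots,x_l')$, one first reads off, from the coloring conditions at the crossings involving the $l$-th pair of strands, the equalities $z_i=z_i'$, $w_i=w_i'$ $(i=1,\ldots,l-1)$ and $y_l=y_l'$; peeling that pair off leaves exactly the same figure with $l-1$ strands (Figure \ref{braid_coloring_proof4}) whose input tuples now agree, so the inductive hypothesis gives $(y_1,\ldots,y_{l-1})=(y_1',\ldots,y_{l-1}')$; the converse is the same argument read from bottom to top, using invertibility of the crossing operations. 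Note that the intermediate equalities which actually hold are the ones paired by this strand-by-strand peeling, not the ones paired by your horizontal slicing --- at intermediate levels of the figure the positional tuples of the two tracks genuinely differ (one is built from $\os$, the other from $\us$), and they only re-coincide in the pattern the peeling exposes. Repairing your invariant to reflect this amounts to reconstructing the paper's induction.
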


\begin{figure}[htb]
\begin{center}
\includegraphics[width=100mm]{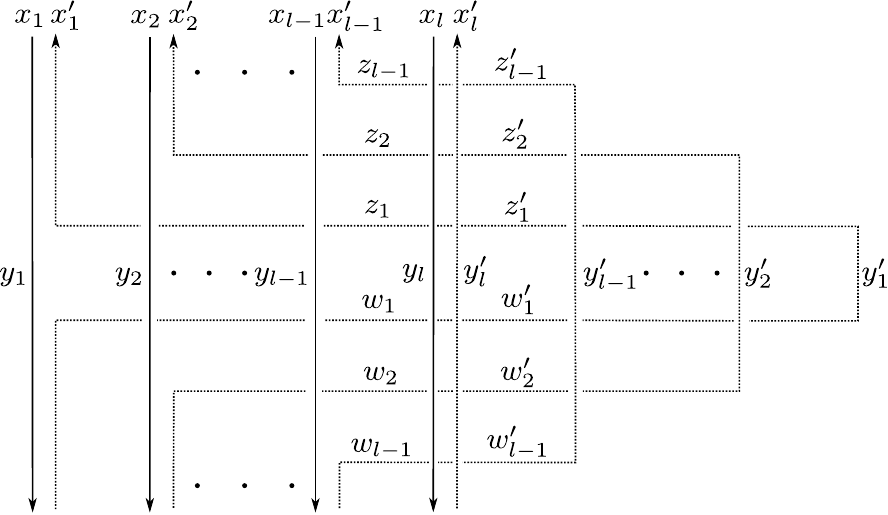}
\end{center}
\caption{}\label{braid_coloring_proof3}
\end{figure}

\begin{proof}
We give the proof by induction on $l$.
When $l=1$, the statement holds immediately.
Assume that the statement is proved for $l-1$.
Suppose that $(x_1,\ldots, x_l)=(x'_1,\ldots, x'_l)$.
Then we have $z_i=z_i'$, $y_l=y_l'$ and $w_i=w_i'$ for any $i=1, \ldots, l-1$ (see Figure \ref{braid_coloring_proof3}).
Hence we obtain the $X$-coloring depicted in Figure \ref{braid_coloring_proof4} 
from the $X$-coloring depicted in Figure \ref{braid_coloring_proof3}.
Therefore we have $(y_1,\ldots, y_{l-1})=(y'_1,\ldots, y'_{l-1})$ by the assumption.
Consequently, it follows that $(y_1,\ldots, y_l)=(y'_1,\ldots, y'_l)$.
In the same way, 
if we suppose that $(y_1,\ldots, y_l)=(y'_1,\ldots, y'_l)$, 
then it follows that $(x_1,\ldots, x_l)=(x'_1,\ldots, x'_l)$, 
where we also have 
$z_i=z_i'$ and $w_i=w_i'$ for any $i=1, \ldots, l-1$.
\end{proof}

\begin{figure}[htb]
\begin{center}
\includegraphics[width=100mm]{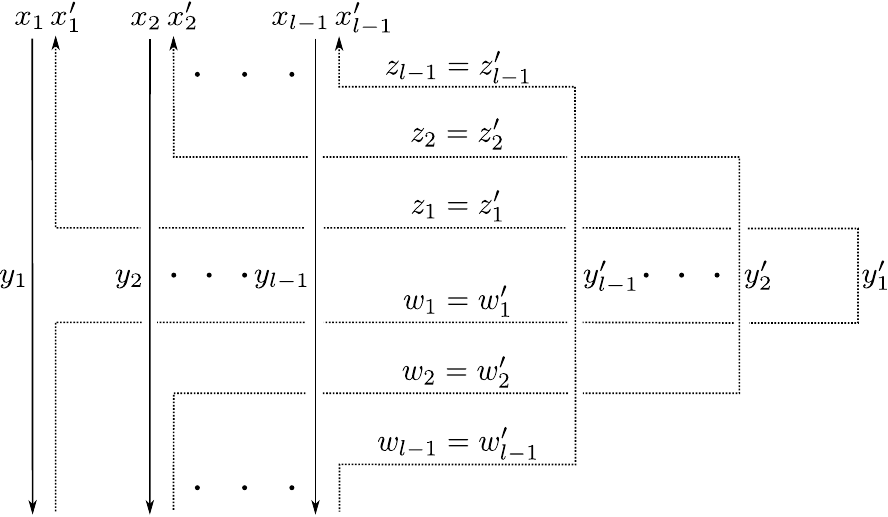}
\end{center}
\caption{}\label{braid_coloring_proof4}
\end{figure}

\begin{theorem}\label{1:1 correspondence}
Let $X$ be an MCB 
and let $D$ be a diagram of an $S^1$-oriented handlebody-link.
Then there is a one-to-one correspondence between 
$\col_X(D)$ and $\col_{\Q(X)}(D)$.
\end{theorem}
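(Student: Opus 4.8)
The plan is to factor the desired bijection through the doubled diagram $W(D)$, using the auxiliary coloring sets $\col^\sqcup_X(D \sqcup -D^h)$ and $\col^W_X(W(D))$ introduced above. Concretely, I would build a chain of bijections
\[
\col_X(D) \;\xrightarrow{\ \sim\ }\; \col^\sqcup_X(D \sqcup -D^h) \;\xrightarrow{\ \sim\ }\; \col^W_X(W(D)) \;\xrightarrow{\ \sim\ }\; \col_{\Q(X)}(D)
\]
and obtain Theorem \ref{1:1 correspondence} by composition. Since $\Q(X)$ and $X$ have the same underlying disjoint union of groups $\bigsqcup_{\lambda \in \Lambda} G_\lambda$, throughout I would identify the sets of colors and only track how the local coloring conditions transform. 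Where the construction of $W(D)$ forces a choice between the two reflected diagrams $-D^h$ and $-D^v$ of $-H^*$, I would reconcile them using Proposition \ref{MCQ,MCB inv.}, which gives a bijection between the coloring sets of any two diagrams of the same handlebody-link.

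For the first arrow, the assignment $C \mapsto C \sqcup C^*$, where $C^* = C \circ \iota$ is the reflected coloring whose well-definedness is recorded in Figure \ref{reflection coloring}, is a bijection from $\col_X(D)$ onto $\col^\sqcup_X(D \sqcup -D^h)$. This is immediate from the definition of $\col^\sqcup_X(D \sqcup -D^h)$: the map is injective because $C$ is the restriction of $C \sqcup C^*$ to $\mathcal{SA}(D)$, and surjective by construction. The genuine work is the second arrow, which I expect to be the main obstacle. Forming $W(D)$ pushes the companion copy underneath $D$ and shifts it parallel to the normal orientations, so each semi-arc of $D$ passes over an entire braid-like tangle of semi-arcs of the companion copy before the two diagrams separate. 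Lemma \ref{braid coloring} is exactly the tool needed here: across each such tangle it guarantees that the tuple of colors entering one side determines, and is determined by, the tuple leaving the other side. I would then verify that the local conditions of Figure \ref{double coloring} are preserved under the slide at every crossing and vertex and assemble these local bijections into one global bijection $\col^\sqcup_X(D \sqcup -D^h) \cong \col^W_X(W(D))$; the coherence of this assembly over all crossings and vertices simultaneously is the technical heart of the argument.

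For the third arrow, I would match $\col^W_X(W(D))$ with $\col_{\Q(X)}(D)$ by reading, for each arc of $D$, the quandle color carried by the paired strand consisting of the strand of $D$ together with the reverse–reflected companion strand running under it. Following the $\col^W$-condition of Figure \ref{double coloring} through a crossing, the top strand contributes the operation $\us y$ while the reversed reflected companion contributes $\os^{-1} y$, so the net effect on a paired strand is $x \mapsto x \us y \os^{-1} y = x * y$, which is precisely the MCQ operation defining $\Q(X)$. At each vertex the group-multiplication condition transfers because $\us x$ and $\os x$ are group homomorphisms and $\Q(X)$ carries the same group structure on each $G_\lambda$; hence the arc colors define an element of $\col_{\Q(X)}(D)$, and conversely each $\Q(X)$-coloring lifts uniquely to a $\col^W$-coloring by running the same computation backwards.

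Composing the three bijections yields the one-to-one correspondence between $\col_X(D)$ and $\col_{\Q(X)}(D)$ asserted in Theorem \ref{1:1 correspondence}. I would finally remark that, because each step is defined diagrammatically and locally, the correspondence is natural in $D$ and compatible with Reidemeister moves via Proposition \ref{MCQ,MCB inv.}, so it descends to the invariant level; but for the statement as given only the bijection on a fixed diagram $D$ is required.
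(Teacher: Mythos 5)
Your chain of bijections is the paper's own chain read in the opposite direction: your first arrow is the inverse of the paper's $\psi_3^X$, your second the inverse of $\psi_2^X$, and your third the inverse of $\psi_1^X$. The approach is therefore essentially the same, but you have omitted the step that makes the construction go through: the reduction of $D$ to a closed trivalent braid diagram. The paper begins by invoking \cite{Murao16}, by which every $S^1$-oriented handlebody-link is represented by the closure $\mathrm{cl}(b)$ of a trivalent braid diagram $b$, and (implicitly via Proposition \ref{MCQ,MCB inv.}) it suffices to prove the theorem for $D=\mathrm{cl}(b)$. All three maps are then constructed on this braid form: colorings of $\mathrm{cl}(b)$ and of $W(\mathrm{cl}(b))$ are determined by the colors of the (semi-)arcs at the top endpoints of $b$, which is what makes $\psi_1^X$ well defined and bijective, and the deformation of $W(D)$ into $D \sqcup -D^h$ in Figure \ref{deformation} is a controlled sequence of moves in which the companion copy passes under the diagram in exactly the configuration treated by Lemma \ref{braid coloring}.

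Without that reduction, your middle arrow is not established. For an arbitrary diagram $D$, Proposition \ref{MCQ,MCB inv.} does give a bijection $\col_X(W(D)) \cong \col_X(D \sqcup -D^h)$, since both diagrams represent $H \sqcup -H^*$; but what the theorem needs is a bijection between the \emph{subsets} $\col^W_X(W(D))$ and $\col^\sqcup_X(D \sqcup -D^h)$, and nothing guarantees that an arbitrary sequence of Reidemeister moves carries one subset onto the other. That is precisely the role of Lemma \ref{braid coloring}, and that lemma concerns one specific local picture ($l$ parallel strands passing over $l$ parallel strands); it applies to the slide in Figure \ref{deformation} because the braid form puts the deformation into exactly that shape, not to the tangles arising from sliding a companion copy under a general diagram. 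You flag this coherence problem yourself as ``the technical heart'' but leave it unresolved; in the paper the resolution is not a finer local analysis but the global preprocessing you skipped. The same remark applies to the unique-lifting claim in your third arrow, whose well-definedness the paper gets from the fact that colorings of a closed braid diagram are determined by the colors at the top of the braid. Once the reduction to braid form is inserted at the start, your argument becomes the paper's proof.
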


\begin{proof}
By \cite{Murao16}, 
any $S^1$-oriented handlebody-link can be represented by 
$\mathrm{cl}( \mathrm{bind}^{m_1,\ldots,m_s}_{n_1,\ldots,n_s}(b_0))$, 
where $b_0$ is a classical $l$-braid diagram 
and $m_i,n_i \in \mathbb{Z}_{>0}$, 
and we can deform it into $\mathrm{cl} (b)$, 
where $b$ is the trivalent braid diagram 
as shown in Figure \ref{closed braid}.
Then we may assume that $D$ has the resulting form $\mathrm{cl} (b)$.
Here we note that 
any MCQ(MCB)-coloring of $D$ 
is determined by the colors of all (semi-)arcs incident to the top endpoints of the trivalent braid diagram $b$.

First, for any $C_1 \in \col_{\Q(X)}(D)$, 
we denote by $\psi_1^X(C_1)$ the $X$-coloring of $W(D)$ depicted in Figure \ref{psi1}.
Then $\psi_1^X$ is a bijective map from $\col_{\Q(X)}(D)$ to $\col_X^W(W(D))$.
Second, 
we can deform $W(D)$ into $D \sqcup -D^h$ by Reidemeister moves as shown in Figure \ref{deformation}.
By Proposition \ref{MCQ,MCB inv.} and Lemma \ref{braid coloring}, 
we obtain a bijective map $\psi_2^X$ from $\col_X^W(W(D))$ to $\col_X^\sqcup(D \sqcup -D^h)$ 
as shown in Figure \ref{deformation}, 
where $x_i$ and $y_i$ are elements in $X$.
Finally, 
we define a map $\psi_3^X$ from $\col^\sqcup_X(D \sqcup -D^h)$ to $\col_X(D)$ 
by $\psi_3^X (C_3 \sqcup C_3^*)=C_3$, 
which is bijective obviously.
Therefore $\psi_3^X \circ \psi_2^X \circ \psi_1^X$ is a bijective map from $\col_{\Q(X)}(D)$ to $\col_X(D)$.
\end{proof}

\begin{figure}[htb]
\begin{center}
\includegraphics[width=112mm]{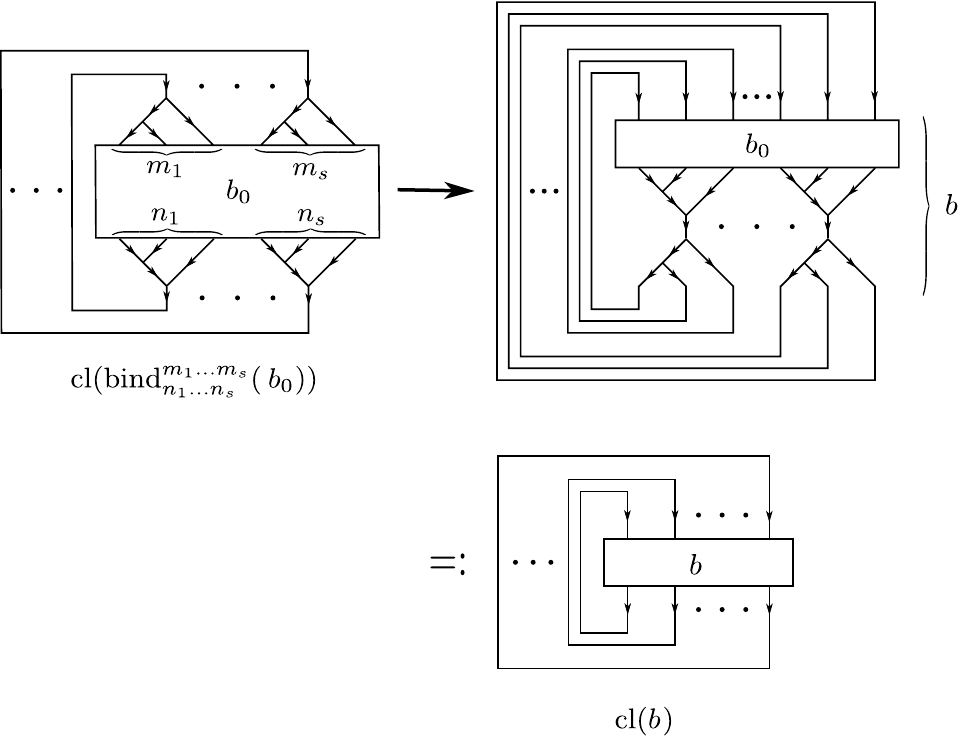}
\end{center}
\caption{A closed trivalent braid diagram.}\label{closed braid}
\end{figure}

\begin{figure}[htb]
\begin{center}
\includegraphics[width=125mm]{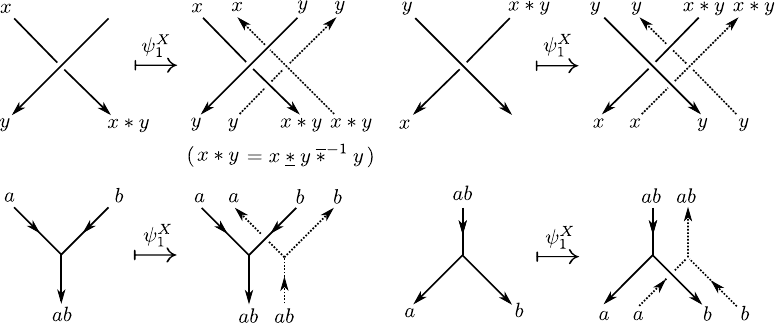}
\end{center}
\caption{The map $\psi_1^X : \col_{\Q(X)}(D) \to \col_X^W(W(D))$.}\label{psi1}
\end{figure}

\begin{figure}[htb]
\begin{center}
\includegraphics[width=132mm]{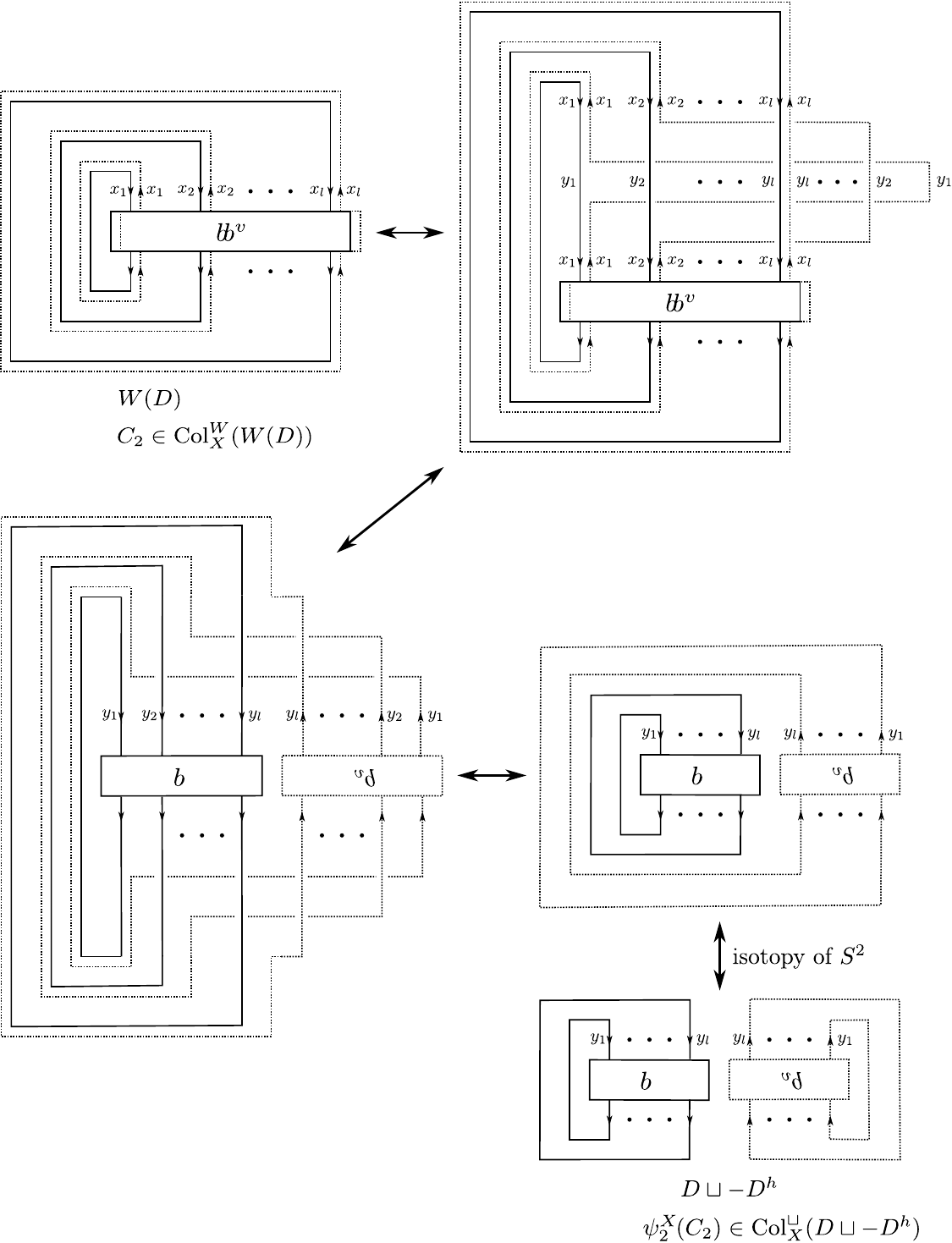}
\end{center}
\caption{The deformation from $W(D)$ to $D \sqcup -D^h$.}\label{deformation}
\end{figure}

\section{A relationship between $G$-family of quandles/biquandles colorings}

A $G$-family of quandles (resp. biquandles), which are algebraic systems 
whose axioms are motivated from handlebody-knot theory, 
yields an MCQ (resp. MCB).
In this section, 
we recall the definitions of a $G$-family of quandles and biquandles 
and define a map from the set of $G$-families of biquandles to that of $G$-families of quandles.
We prove that 
there is the similar correspondence 
between $G$-family of quandles and biquandles colorings 
to the one between MCQ and MCB colorings.

\begin{definition}[\cite{IIJO13}]
Let $G$ be a group with identity element $e$.
A \emph{$G$-family of quandles} is a non-empty set $X$ 
with a family of binary operations $*^g:X \times X \to X ~(g \in G)$ 
satisfying the following axioms.
\begin{itemize}
\item
For any $x \in X$ and $g \in G$, 
$x*^gx=x.$
\item
For any $x,y \in X$ and $g,h \in G$, 
$x*^{gh}y=(x*^gy)*^hy$ and $x*^ey=x$.
\item\
For any $x,y,z \in X$ and $g,h \in G$, 
$(x*^gy)*^hz=(x*^hz)*^{h^{-1}gh}(y*^hz)$.
\end{itemize}
\end{definition}

Let $R$ be a ring and let $G$ be a group with identity element $e$.
Let $X$ be a right $R[G]$-module, where $R[G]$ is the group ring of $G$ over $R$.
Then $(X,\{ *^g \}_{g \in G})$ is a $G$-family of quandles, called a \emph{$G$-family of Alexander quandles}, 
with $x*^gy=xg+y(e-g)$ \cite{IIJO13}.
Let $(X,*)$ be a quandle and let $m$ be the type of $X$.
Then $(X,\{*^i\}_{i \in \mathbb{Z}_{km}})$ is a $\mathbb{Z}_{km}$-family of quandles 
for any $k \in \mathbb{Z}_{\geq 0}$ \cite{IIJO13}.
In particular, 
when $X$ is an Alexander quandle, 
$(X,\{*^i\}_{i \in \mathbb{Z}_{km}})$ is called a \emph{$\mathbb{Z}_{km}$-family of Alexander quandles}.

Let $(X,\{ *^g \}_{g \in G})$ be a $G$-family of quandles.
Then $X \times G =\bigsqcup_{x \in X} \{ x \} \times G$ is an MCQ with 
\begin{align*}
(x,g)*(y,h):=(x*^hy,h^{-1}gh),
\hspace{10mm}
(x,g)(x,h):=(x,gh)
\end{align*}
for any $x,y \in X$ and $g,h \in G$ \cite{Ishii15}.
We call it the \emph{associated MCQ} 
of  $(X,\{ *^g \}_{g \in G})$.

\begin{definition}[\cite{IIKKMOpre,IN17}]
Let $G$ be a group with identity element $e$.
A \emph{$G$-family of biquandles} is a non-empty set $X$ 
with two families of binary operations $\us^g, \os^g :X \times X \to X ~(g \in G)$ 
satisfying the following axioms.
\begin{itemize}
\item
For any $x \in X$ and $g \in G$, 
\begin{align*}
x\us^gx=x\os^gx.
\end{align*}
\item
For any $x,y \in X$ and $g,h \in G$, 
\begin{align*}
& x\us^{gh}y=(x\us^gy)\us^h(y\us^gy),~~ x\us^ey=x,\\
& x\os^{gh}y=(x\os^gy)\os^h(y\os^gy),~~ x\os^ey=x.
\end{align*}
\item
For any $x,y,z \in X$ and $g,h \in G$, 
\begin{align*}
(x\us^g y)\us^h(z\os^gy)=(x\us^hz)\us^{h^{-1}gh}(y\us^hz),\\
(x\os^g y)\us^h(z\os^gy)=(x\us^hz)\os^{h^{-1}gh}(y\us^hz),\\
(x\os^g y)\os^h(z\os^gy)=(x\os^hz)\os^{h^{-1}gh}(y\us^hz).
\end{align*}
\end{itemize}
\end{definition}

Let $R$ be a ring, $G$ be a group with identity element $e$ 
and let $f : G \to Z(G)$ be a homomorphism, where $Z(G)$ is the center of $G$.
Let $X$ be a right $R[G]$-module, where $R[G]$ is the group ring of $G$ over $R$.
Then $(X, \{\us^g \}_{g \in G}, \{\os^g\}_{g \in G})$ is a $G$-family of biquandles, called a \emph{$G$-family of Alexander biquandles}, 
with $x \us^g y=xg+y(f(g)-g)$ and $x \os^g y=x f(g)$ \cite{IIKKMOpre}.
Let $(X,\us,\os)$ be a biquandle and let $m$ be the type of $X$.
Then $(X,\{\us^i\}_{i \in \mathbb{Z}_{km}}, \{\os^i\}_{i \in \mathbb{Z}_{km}})$ is a $\mathbb{Z}_{km}$-family of biquandles 
for any $k \in \mathbb{Z}_{\geq 0}$ \cite{IN17}.
In particular, 
when $X$ is an Alexander biquandle, 
$(X,\{\us^i\}_{i \in \mathbb{Z}_{km}}, \{\os^i\}_{i \in \mathbb{Z}_{km}})$ is called a \emph{$\mathbb{Z}_{km}$-family of Alexander biquandles}.

Let $(X,\{ \us^g \}_{g \in G},\{ \os^g \}_{g \in G})$ be a $G$-family of biquandles.
Then $X \times G =\bigsqcup_{x \in X} \{ x \} \times G$ is an MCB with 
\begin{align*}
&(x,g)\us(y,h):=(x\us^hy,h^{-1}gh),
\hspace{10mm}
(x,g)(x,h):=(x,gh),\\
&(x,g)\os(y,h):=(x\os^hy,g)
\end{align*}
for any $x,y \in X$ and $g,h \in G$ \cite{IIKKMOpre,IN17}.
We call it the \emph{associated MCB} 
of  $(X,\{ \us^g \}_{g \in G},\{ \os^g \}_{g \in G})$.

Let $(X,\{ \us^g \}_{g \in G},\{ \os^g \}_{g \in G} )$ be a $G$-family of biquandles.
For any $x,y \in X$ and $g \in G$, 
it follows that
\begin{align*}
(x \us^g y) \us^{g^{-1}}(y \us^g y)
=x \us^e y
=x
\end{align*}
and
\begin{align*}
x \us^{g^{-1}}(y \us^g y) \us^g y
&= \{ x \us^{g^{-1}}(y \us^g y) \} \us^g \{ (y \us^g y) \us^{g^{-1}} (y \us^g y) \}\\
&= x \us^e (y \us^g y)\\
&= x.
\end{align*}
Hence the map $\us^g y : X \to X$, 
which sends $x$ into $x \us^g y$, 
is a bijection 
and $(\us^g y)^{-1} (x)=x \us ^{g^{-1}}(y \us^g y)$.
Similarly, 
the map $\os^g y : X \to X$, 
which sends $x$ into $x \os^g y$, 
is a bijection 
and $(\os^g y)^{-1} (x)=x \os ^{g^{-1}}(y \os^g y)$.
Then we have the following proposition.

\begin{proposition}\label{Gbiqnd to Gqnd}
Let $(X,\{ \us^g \}_{g \in G},\{ \os^g \}_{g \in G} )$ be a $G$-family of biquandles.
Then $(X,\{ *^g \}_{g \in G})$ is a $G$-family of quandles 
by defining $x *^g y =(x \us^g y) \os^{g^{-1}}(y \os^g y)$.
\end{proposition}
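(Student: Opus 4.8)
The plan is to avoid a head-on verification of the three $G$-family of quandles axioms and instead route the computation through the associated MCB and the functor $\Q$ of Section~4. First I would form the associated MCB $X \times G$ of the given $G$-family of biquandles, which is an MCB by the fact recalled just above the statement, and then apply $\Q$ to it. By Proposition~\ref{MCB to MCQ} the result $\Q(X \times G)$ is an MCQ, carried by the same underlying set and with the same groups $\{x\} \times G$.

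The key computational step is to identify the quandle operation of $\Q(X \times G)$ explicitly. Writing $a=(x,g)$ and $b=(y,h)$, one has $a \us b=(x \us^h y,\, h^{-1}gh)$, and the inverse of the map $\os b$ sends $(v,k)$ to $(v \os^{h^{-1}}(y \os^h y),\, k)$ by the formula $(\os^h y)^{-1}(v)=v \os^{h^{-1}}(y \os^h y)$ established before the statement. Hence
\begin{align*}
(x,g)*(y,h)
&=(x,g) \us (y,h) \os^{-1}(y,h)\\
&=\bigl((x \us^h y)\os^{h^{-1}}(y \os^h y),\ h^{-1}gh\bigr)\\
&=(x*^h y,\ h^{-1}gh),
\end{align*}
which is exactly the operation one assigns to the associated MCQ of $(X,\{*^g\})$. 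Thus $\Q(X \times G)$ is the set $X \times G$ equipped with $(x,g)*(y,h)=(x*^h y,\, h^{-1}gh)$ together with the componentwise multiplication $(x,g)(x,h)=(x,gh)$.

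Finally I would read the three $G$-family of quandles axioms off the MCQ axioms satisfied by $\Q(X \times G)$ by specializing to suitable elements. Taking $a=(x,g)$ and $b=(x,h)$ in the same group $\{x\}\times G$, the axiom $a*b=b^{-1}ab$ forces $x*^h x=x$; the axioms $x*e_\lambda=x$ and $x*ab=(x*a)*b$ with $ab=(x,g)(x,h)=(x,gh)$ give $x*^e y=x$ and $x*^{gh}y=(x*^g y)*^h y$ after matching first coordinates; and self-distributivity $(x*y)*z=(x*z)*(y*z)$ applied to general $(x,a),(y,b),(z,c)$ yields $(x*^b y)*^c z=(x*^c z)*^{c^{-1}bc}(y*^c z)$. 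The main point requiring care is precisely this last extraction: in each specialization one must check that the group coordinates on both sides coincide (a short conjugation bookkeeping, e.g.\ that $(c^{-1}bc)^{-1}(c^{-1}ac)(c^{-1}bc)=c^{-1}b^{-1}abc$ agrees with the coordinate produced on the left), so that equality of the MCQ elements genuinely reduces to equality of the $X$-components, which are the desired $G$-family identities. Since $g,h$ and the elements of $X$ range freely, these specializations hold universally, and therefore $(X,\{*^g\}_{g\in G})$ is a $G$-family of quandles.
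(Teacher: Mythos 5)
Your proof is correct, but it takes a genuinely different route from the paper's. The paper verifies the three $G$-family of quandles axioms head-on, by direct (and fairly long) computations from the $G$-family of biquandles axioms --- for instance, it proves $x *^{gh} y = (x *^g y) *^h y$ by showing that both sides are carried to $x \us^{gh} y$ by the invertible maps $\os^g y$ and $\os^h (y \os^g y)$, and handles the distributivity axiom by a similar trick with the auxiliary element $\alpha = (y \us^h z) \os^{h^{-1}}(z \os^h z)$. You instead pass to the associated MCB $X \times G$ (a fact the paper quotes from the literature, independent of this proposition), apply the functor $\Q$ of Proposition \ref{MCB to MCQ} to obtain an MCQ structure on $X \times G$, compute that its operation is $(x,g)*(y,h) = (x *^h y,\, h^{-1}gh)$ using the identity $(\os^h y)^{-1}(v) = v \os^{h^{-1}}(y \os^h y)$ established just before the statement, and then project the first three MCQ axioms to the $X$-coordinate after checking that the $G$-coordinates on both sides agree. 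All of this checks out, and there is no circularity: both ingredients you invoke precede and are independent of the proposition (the paper's remark that $\Q(X \times G)$ coincides with $\Q_G(X) \times G$ comes \emph{after} the proposition and is not used by you; rather, your argument reproves it). What your approach buys: it is shorter, more conceptual, and makes the proposition transparent as the $G$-family shadow of the $\MCB \to \MCQ$ functor, giving the compatibility $\Q(X \times G) = \Q_G(X) \times G$ for free. What the paper's approach buys: it is self-contained within the $G$-family axioms, relying neither on the nontrivial cited fact that $X \times G$ is an MCB nor on the computations behind Proposition \ref{MCB to MCQ}, and it produces the identities at the level of $X$ directly.
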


\begin{proof}
\begin{itemize}
\item
For any $x \in X$ and $g \in G$, 
\begin{align*}
x*^gx 
=(x \us^g x) \os^{g^{-1}}(x \os^g x)
=(x \os^g x) \os^{g^{-1}}(x \os^g x)
=x \os^e x
=x.
\end{align*}
\item
For any $x,y \in X$, $g,h \in G$, 
\begin{align*}
&x*^{gh}y \os^g y \os^h (y \os^g y)\\
&= x \us^{gh} y \os^{h^{-1}g^{-1}} (y \os^{gh} y) \os^g y \os^h (y \os^g y) \\
&= x \us^{gh} y \os^{h^{-1}} (y \os^{gh} y) \os^{g^{-1}} \{ (y \os^{gh} y) \os^{h^{-1}}(y \os^{gh} y)\} \os^g y \os^h (y \os^g y) \\
&= x \us^{gh} y \os^{h^{-1}} (y \os^{gh} y) \os^{g^{-1}} (y \os^g y) \os^g y \os^h (y \os^g y) \\
&= x \us^{gh} y \os^{h^{-1}} (y \os^{gh} y) \os^h (y \os^g y) \\
&= x \us^{gh} y \os^{h^{-1}} \{ (y \os^g y) \os^h (y \os^g y) \} \os^h (y \os^g y) \\
&= x \us^{gh} y.
\end{align*}
On the other hand, 
\begin{align*}
&(x *^g y) *^h y \os^g y \os^h (y \os^g y)\\
&= \{x *^g y \us^h y \os^{h^{-1}} (y \os^h y) \os^g y\} \os^h (y \os^g y) \\
&= \{x *^g y \us^h y \os^{h^{-1}} (y \os^h y) \os^h y\} \os^{h^{-1}gh} (y \us^h y) \\
&= (x *^g y \us^h y) \os^{h^{-1}gh} (y \us^h y) \\
&= \{ (x \us^g y) \os^{g^{-1}}(y \os^g y) \os^g y\} \us^h (y \os^g y) \\
&= (x \us^g y) \us^h (y \us^g y) \\
&= x \us^{gh} y.
\end{align*}
Therefore we have $x *^{gh} y= (x *^g y)*^h y$.
we can easily check that $x*^e y=x$ for any $x,y \in X$.
\item
For any $x,y,z \in X$, $g,h \in G$ and $\alpha = (y \us^h z) \os^{h^{-1}} (z \os^h z)$, 
\begin{align*}
&(x *^g y)*^h z  \os^{h^{-1}gh} \alpha \os^{h^{-1}g^{-1}hgh} (z \us^{h^{-1}gh} \alpha) \\
&=\{( (x \us^g y) \os^{g^{-1}} (y \os^g y) \us^h z \os^{h^{-1}} (z \os^h z)) \os^{h^{-1}gh} \alpha \} \os^{h^{-1}g^{-1}hgh} (z \us^{h^{-1}gh} \alpha) \\
&=\{( (x \us^g y) \os^{g^{-1}} (y \os^g y) \us^h z \os^{h^{-1}} (z \os^h z)) \os^h z \} \os^{h^{-1}gh} (\alpha \os^h z) \\
&=\{(x \us^g y) \os^{g^{-1}} (y \os^g y) \us^h z\} \os^{h^{-1}gh} (y \us^h z) \\
&=\{(x \us^g y) \os^{g^{-1}} (y \os^g y) \os^g y\} \us^h (z \os^g y) \\
&=(x \us^g y) \us^h (z \os^g y).
\end{align*}
On the other hand, 
\begin{align*}
& (x *^h z)*^{h^{-1}gh} (y *^h z) \os^{h^{-1}gh} \alpha \os^{h^{-1}g^{-1}hgh} (z \us^{h^{-1}gh} \alpha)\\
&= ((x \us^h z) \os^{h^{-1}} (z \os^h z)) *^{h^{-1}gh} ((y \us^h z) \os^{h^{-1}} (z \os^h z)) \os^{h^{-1}gh} \alpha \os^{h^{-1}g^{-1}hgh} (z \us^{h^{-1}gh} \alpha) \\
&= ((x \us^h z) \os^{h^{-1}} (z \os^h z)) \us^{h^{-1}gh} \alpha \os^{h^{-1}g^{-1}h} (\alpha \os^{h^{-1}gh} \alpha )\os^{h^{-1}gh} \alpha \os^{h^{-1}g^{-1}hgh} (z \us^{h^{-1}gh} \alpha) \\
&= \{ ((x \us^h z) \os^{h^{-1}} (z \os^h z)) \us^{h^{-1}gh} \alpha \}  \os^{h^{-1}g^{-1}hgh} ( z \us^{h^{-1}gh} \alpha )\\
&=\{ ((x \us^h z) \os^{h^{-1}} (z \os^h z)) \os^h z \} \us^{h^{-1}gh} \{ ((y \us^h z) \os^{h^{-1}} (z \os^h z)) \os^h z \} \\
&=(x \us^h z) \us^{h^{-1}gh} (y \us^h z)\\
&=(x \us^g y) \us^h (z \os^g y).
\end{align*}
Therefore we have $(x *^g y)*^h z= (x *^h z)*^{h^{-1}gh} (y *^h z)$.
\end{itemize}
\end{proof}

By Proposition \ref{Gbiqnd to Gqnd}, 
for any $G$-family of biquandles $(X,\{ \us^g \}_{g \in G},\{ \os^g \}_{g \in G})$, 
we have a $G$-family of quandles $(X,\{*^g\}_{g \in G})$,
denoted by $\Q_G(X)$, 
by defining $x *^g y =(x \us^g y) \os^{g^{-1}}(y \os^g y)$.
Then $\Q_G$ is a map from the set of $G$-families of biquandles to that of $G$-families of quandles.
In particular, 
let $(X,\{ \us^g \}_{g \in G},\{ \os^g \}_{g \in G})$ be a $G$-family of Alexander biquandles, 
where $X$ is a right $R[G]$-module for some ring $R$ and group $G$ with a homomorphism $\phi : G \to Z(G)$.
Then $\Q_G(X)$ is a $G$-family of Alexander quandles with the action $xg:=xg\phi(g)$ 
since 
for any $x,y \in X$ and $g \in G$,
we have 
$x *^g y =(x \us^g y) \os^{g^{-1}}(y \os^g y)=xg\phi(g) +y(e-g\phi(g))$.

For any $G$-family of biquandles $(X,\{ \us^g \}_{g \in G},\{ \os^g \}_{g \in G})$ and its associated MCB $X \times G$, 
the MCQ $\Q(X \times G)$ coincides with the associated MCQ $\Q_G(X) \times G$ of 
the $G$-family of quandles $\Q_G(X)$ 
with $(x,g)*(y,h)=((x \us^h y) \os^{h^{-1}}(y \os^h y),h^{-1}gh)$.

Let $G$ be a group 
and let $D$ be a diagram of an $S^1$-oriented handlebody-link $H$.
A \emph{$G$-flow} of $D$ is a map $\phi : \mathcal{A}(D) \to G$ satisfying the conditions 
depicted in Figure \ref{flow} 
at each crossing and vertex.
In this paper, 
to avoid confusion, 
we often represent an element of $G$ with an underline.
We denote by $(D,\phi)$, 
which is called a \emph{$G$-flowed diagram} of $H$, 
a diagram $D$ given a $G$-flow $\phi$, 
and by $\flow(D;G)$ the set of all $G$-flows of $D$.
We can identify a $G$-flow $\phi$ 
with a homomorphism from the fundamental group $\pi_1(S^3-H)$ to $G$.

\begin{figure}[htb]
\begin{center}
\includegraphics[width=77mm]{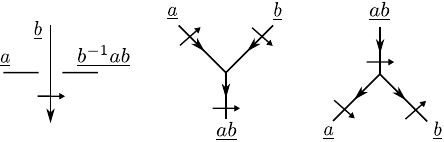}
\end{center}
\caption{A $G$-flow of $D$.}\label{flow}
\end{figure}

Let $G$ be a group 
and let $D$ be a diagram of an $S^1$-oriented handlebody-link $H$.
Let $D'$ be a diagram obtained by applying one of Reidemeister moves 
to the diagram $D$ once.
For any $G$-flow $\phi$ of $D$, 
there is a unique $G$-flow $\phi'$ of $D'$ 
which coincides with $\phi$ 
except near the point where the move is applied.
Therefore 
the cardinality of the set of $G$-flows of $D$, 
denoted by $\# \flow(D;G)$, 
is an invariant of $H$.
We call the $G$-flow $\phi'$ 
the \emph{associated $G$-flow} of $\phi$ 
and the $G$-flowed diagram $(D',\phi')$ 
the \emph{associated $G$-flowed diagram} of $(D,\phi)$.

Let $X$ be a $G$-family of quandles (resp. biquandles) 
and let $(D,\phi)$ be a $G$-flowed diagram of an $S^1$-oriented handlebody-link.
An \emph{$X$-coloring} of $(D,\phi)$ is a map 
$C : \mathcal{A}(D,\phi) \to X$ (resp. $\mathcal{SA}(D,\phi) \to X$ ) satisfying 
the conditions 
depicted in Figure \ref{G-family_of_qnd_col.} (resp. Figure \ref{G-family_of_biqnd_col.}) 
at each crossing and vertex.
We denote by $\col_X(D,\phi)$ 
the set of all $X$-colorings of $(D,\phi)$.
We note that 
when $X$ is a $G$-family of Alexander (bi)quandles 
that is a right $R[G]$-module for some ring $R$, 
the set $\col_X(D,\phi)$ is a right $R$-module 
with the action $(C \cdot r) (\alpha) := C(\alpha)r$ 
and the addition $(C + C')(\alpha):= C(\alpha)+C'(\alpha)$ 
for any $C, C' \in \col_X(D,\phi)$, $\alpha \in \mathcal{A}(D,\phi)$ (or $\alpha \in \mathcal{SA}(D,\phi)$) and $r \in R$.

\begin{figure}[htb]
\begin{center}
\includegraphics[width=85mm]{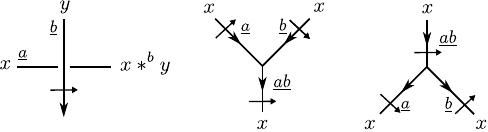}
\end{center}
\caption{A $G$-family of quandles coloring of $(D,\phi)$.}\label{G-family_of_qnd_col.}
\end{figure}

\begin{figure}[htb]
\begin{center}
\includegraphics[width=125mm]{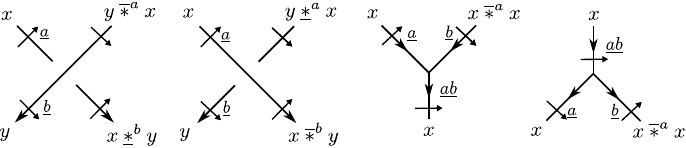}
\end{center}
\caption{A $G$-family of biquandles coloring of $(D,\phi)$.}\label{G-family_of_biqnd_col.}
\end{figure}

\begin{proposition}[\cite{IIJO13,IN17}]
Let $X$ be a $G$-family of (bi)quandles  
and let $(D,\phi)$ be a $G$-flowed diagram of an $S^1$-oriented handlebody-link.
Let $(D',\phi')$ be the associated $G$-flowed diagram of $(D,\phi)$.
For any $X$-coloring $C$ of $(D,\phi)$, 
there is a unique $X$-coloring $C'$ of $(D',\phi')$ 
which coincides with $C$ 
except near the point where the move is applied. 
\end{proposition}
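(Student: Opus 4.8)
The plan is to reduce the statement to the already established invariance of MCQ/MCB colorings (Proposition \ref{MCQ,MCB inv.}) together with the invariance of $G$-flows, by passing to the associated MCQ/MCB. I would treat the biquandle case in detail; the quandle case is identical after replacing the associated MCB $X \times G$ by the associated MCQ and $\mathcal{SA}$ by $\mathcal{A}$.

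The first step is to set up a bijection between $G$-family colorings of a flowed diagram and flow-compatible colorings by the associated MCB. Given a $G$-flowed diagram $(D,\phi)$ and a $G$-family $X$-coloring $C$, I would define an $(X \times G)$-coloring $\widehat{C}$ of $D$ by $\widehat{C}(\alpha) = (C(\alpha), \phi(\alpha))$, and then check, crossing by crossing and vertex by vertex, that the defining conditions of an $X$-coloring of $(D,\phi)$ (Figure \ref{G-family_of_biqnd_col.}) together with the $G$-flow conditions (Figure \ref{flow}) are equivalent to the single requirement that $\widehat{C}$ be an MCB-coloring of $D$ by $X \times G$ (Figure \ref{MCB-coloring}). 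The point is purely computational: at a crossing the formulas $(x,g) \us (y,h) = (x \us^h y, h^{-1}gh)$ and $(x,g) \os (y,h) = (x \os^h y, g)$ show that the $G$-component of $\widehat{C}$ is unchanged along each overpass and conjugates across underpasses, hence descends to the arcs and transforms exactly as a flow, while the $X$-component transforms exactly as a $G$-family coloring; at a vertex the product $(x,g)(x,h) = (x,gh)$ encodes simultaneously the flow condition and the requirement that the merging semi-arcs carry a common $X$-color. This yields a bijection
\[
\col_X(D,\phi) \;\xrightarrow{\;\sim\;}\; \{\, \widehat{C} \in \col_{X \times G}(D) \mid \text{the $G$-part of $\widehat{C}$ equals $\phi$} \,\},
\]
which is compatible with restriction away from any disk.

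The second step is to run the move. By hypothesis the move takes $(D,\phi)$ to its associated flowed diagram $(D',\phi')$, and $\phi'$ is the unique $G$-flow of $D'$ agreeing with $\phi$ off the move. Applying Proposition \ref{MCQ,MCB inv.} to the MCB $X \times G$ produces a unique $(X \times G)$-coloring $\widehat{C'}$ of $D'$ agreeing with $\widehat{C}$ off the move. Its $G$-part is then a $G$-flow of $D'$ agreeing with $\phi$ off the move, so by uniqueness of the associated flow it equals $\phi'$; hence $\widehat{C'}$ is flow-compatible and corresponds, under the bijection above applied to $(D',\phi')$, to an $X$-coloring $C'$ of $(D',\phi')$. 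Uniqueness of $\widehat{C'}$ and of $\phi'$ forces uniqueness of $C'$, and since both bijections respect agreement away from the move, $C'$ agrees with $C$ there.

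I expect the main obstacle to be the first step: verifying, for every local picture in Figures \ref{flow} and \ref{G-family_of_biqnd_col.}, that the associated-MCB coloring conditions reproduce exactly the flow conditions and the $G$-family coloring conditions. The delicate bookkeeping is in matching the group-element indices --- the conjugations $h^{-1}gh$ appearing in $\us$ and the products $gh$ appearing at vertices --- between the two descriptions, and in confirming that the group structure of $\{x\} \times G$ forces equal $X$-colors at each vertex. Alternatively one could argue directly, checking each of R1--R6 (Theorem \ref{Reidemeister moves}) by solving the local coloring equations; there the bijectivity of the maps $\us^g y$ and $\os^g y$ established just above the proposition gives uniqueness of the propagated colors, the relations $x \us^g x = x \os^g x$ and the product axioms handle R1 and the vertex moves, and the three mixed axioms of a $G$-family of biquandles handle R3. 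Either way the crossing moves are routine and the vertex moves R4--R6, where flow and color must be tracked simultaneously, carry the real content.
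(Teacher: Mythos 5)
Your proof is correct, but it is not the paper's: the paper offers no proof of this proposition at all --- it is quoted from \cite{IIJO13,IN17}, where it is established by checking the coloring conditions directly against each of the moves R1--R6. Your reduction is a genuinely different route: you pass to the associated MCB (resp.\ MCQ) $X\times G$, identify $\col_X(D,\phi)$ with the set of $(X\times G)$-colorings of $D$ whose $G$-part is $\phi$, and then invoke Proposition \ref{MCQ,MCB inv.} together with the uniqueness of the associated $G$-flow $\phi'$. This is non-circular within the paper's framework, because Proposition \ref{MCQ,MCB inv.} is established independently in \cite{Ishii15,IIKKMOpre,IN17}; moreover, the identification in your first step, including the decomposition $\col_{X\times G}(D)=\bigsqcup_{\phi'\in\flow(D;G)}\col^{\phi'}_{X\times G}(D)$, is exactly the (unproved) identification the paper itself states in the paragraph preceding Corollary \ref{isomorphic}, so your argument in effect makes explicit a derivation the paper leaves implicit. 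Your uniqueness bookkeeping is also sound: a competing coloring $C''$ of $(D',\phi')$ agreeing with $C$ off the move lifts to an $(X\times G)$-coloring of $D'$ agreeing with $\widehat{C}$ off the move, and the uniqueness clause of Proposition \ref{MCQ,MCB inv.} forces $C''=C'$. The trade-off between the two approaches: the direct verification of the cited sources is self-contained and presupposes no MCB theory, while your reduction is shorter given the paper's toolkit and exhibits the $G$-family statement as a formal consequence of the MCB statement plus flow uniqueness --- the same mechanism the paper exploits in proving Corollary \ref{isomorphic}. The one step you should not wave away is the crossing-by-crossing and vertex-by-vertex check in your first step (constancy of the $G$-component along over-arcs, conjugation at under-crossings, and the fact that the partial products on $\{x\}\times G$ force equal $X$-colors at each vertex), since that equivalence is precisely where the two notions of coloring are glued together; as you note, it is routine, but it carries the real content of the reduction.
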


By this proposition, 
we have $\# \col_X(D,\phi) = \# \col_X(D',\phi')$.

Let $X$ be a $G$-family of quandles (resp. biquandles), 
$X \times G$ the associated MCQ (resp. MCB) of $X$ 
and let $\pr_G$ and $\pr_X$ be the natural projections from $X \times G$ to $G$ and from $X \times G$ to $X$ respectively.
For any $\phi \in \flow (D;G)$, 
we define $\col_{X \times G}^\phi (D):=\{ C \in \col_{X \times G}(D) ~|~ \pr_G \circ C=\phi \}$, 
where for any $\alpha \in \mathcal{SA}(D)$ and $\widetilde{\alpha} \in \mathcal{A}(D)$ satisfying $\alpha \subset \widetilde{\alpha}$, 
we put $\phi(\alpha):=\phi(\widetilde{\alpha})$ 
when $X$ is a $G$-family of biquandles.
Then we can identify $\col_{X \times G}^\phi (D)$ with $\col_X(D,\phi)$, 
that is, 
for any $C \in \col_{X \times G}^\phi (D)$, 
the map $\pr_G \circ C$ corresponds to the $G$-flow $\phi$ of $D$, 
and the map $\pr_X \circ C$ corresponds to the $X$-coloring of $(D,\phi)$.
Therefore $\col_{X \times G}^\phi (D)$ is also a right $R$-module 
in the same way as $\col_X(D,\phi)$.
Then we obtain the following corollary by Theorem \ref{1:1 correspondence}.

\begin{corollary}\label{isomorphic}
Let  $X$ be a $G$-family of biquandles 
and let $(D,\phi)$ be a $G$-flowed diagram of an $S^1$-oriented handlebody-link.
Then there is a one-to-one correspondence between $\col_X(D,\phi)$ and $\col_{\Q_G(X)}(D,\phi)$.
In particular, when $X$ is a $G$-family of Alexander biquandles, 
$\col_X(D,\phi)$ is isomorphic to $\col_{\Q_G(X)}(D,\phi)$ as right $R$-modules.
\end{corollary}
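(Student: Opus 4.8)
The plan is to deduce the corollary from Theorem \ref{1:1 correspondence} by tracking the $G$-flow through the bijection constructed there. Recall that for a $G$-family of biquandles $X$ with associated MCB $X \times G$ one has $\Q(X \times G) = \Q_G(X) \times G$, and that the paper already identifies $\col_{X \times G}^\phi(D)$ with $\col_X(D,\phi)$ and $\col_{\Q_G(X) \times G}^\phi(D) = \col_{\Q(X \times G)}^\phi(D)$ with $\col_{\Q_G(X)}(D,\phi)$, both as right $R$-modules in the Alexander case. Thus it suffices to apply Theorem \ref{1:1 correspondence} to the MCB $X \times G$ and to show that the resulting bijection $\Psi := \psi_3^{X \times G} \circ \psi_2^{X \times G} \circ \psi_1^{X \times G} \colon \col_{\Q(X \times G)}(D) \to \col_{X \times G}(D)$ sends the $\phi$-graded piece onto the $\phi$-graded piece, i.e.\ that $\pr_G \circ \Psi(C) = \pr_G \circ C$ for every $C$.

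First I would record that $G$ itself is an MCB consisting of the single group $G$ with $g \us h = h^{-1}gh$ and $g \os h = g$, that then $\Q(G) = G$ with $g * h = h^{-1}gh$, and that the projection $\pr_G \colon X \times G \to G$ is an MCB homomorphism. The key point is that the bijection of Theorem \ref{1:1 correspondence} is natural in its MCB argument: each of $\psi_1$, $\psi_2$, $\psi_3$ is defined by diagrammatic rules expressed solely through $\us$, $\os$ and their inverses together with Proposition \ref{MCQ,MCB inv.}, so post-composition with any MCB homomorphism $f$ commutes with all three maps. Applying naturality to $f = \pr_G$, and using $\Q(\pr_G) = \pr_G$, yields $\pr_G \circ \Psi(C) = \Psi^G(\pr_G \circ C)$, where $\Psi^G$ is the Theorem \ref{1:1 correspondence} bijection for the MCB $G$.

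It then remains to analyse $\Psi^G$. Since $\os$ is trivial on $G$, an over-strand keeps its group element across each crossing, so the semi-arc $G$-colorings counted by $\col_G(D)$ are canonically the arc $G$-colorings counted by $\col_{\Q(G)}(D)$; I would argue that $\Psi^G$ is exactly this canonical identification, assigning to each semi-arc the group element of the arc containing it. In particular $\Psi^G$ preserves the underlying group values, so $\pr_G \circ \Psi(C)$ is the semi-arc flow induced by the arc flow $\pr_G \circ C = \phi$, which is precisely the membership condition for $\col_{X \times G}^\phi(D)$. Hence $\Psi$ restricts to a bijection $\col_{\Q(X \times G)}^\phi(D) \to \col_{X \times G}^\phi(D)$, which under the stated identifications is the desired bijection $\col_{\Q_G(X)}(D,\phi) \to \col_X(D,\phi)$.

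For the Alexander case I would upgrade this to an $R$-module isomorphism. With $\phi$ fixed, every group element appearing in the coloring rules becomes a fixed element of $R[G]$ acting on the $R[G]$-module $X$, so the rules defining $\psi_1$, $\psi_2$, $\psi_3$ become $R$-linear relations among the colors; consequently the restricted bijection commutes with the pointwise $R$-action and addition on colorings, and being bijective it is an isomorphism of right $R$-modules. The main obstacle is the flow-preservation step: one must be sure that the concrete maps $\psi_1^X, \psi_2^X, \psi_3^X$ are genuinely functorial in $X$ and that $\Psi^G$ is the identity-type correspondence described above. This can alternatively be confirmed by direct inspection of Figures \ref{psi1} and \ref{deformation}, checking that the $G$-component of every color is merely copied or conjugated in accordance with the flow.
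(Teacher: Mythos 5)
Your proposal follows the paper's proof in all essentials: identify $\col_X(D,\phi)$ with $\col_{X\times G}^{\phi}(D)$ and $\col_{\Q_G(X)}(D,\phi)$ with $\col_{\Q_G(X)\times G}^{\phi}(D)=\col_{\Q(X\times G)}^{\phi}(D)$, decompose both coloring sets over $\flow(D;G)$, apply the bijection $\Psi^{X\times G}=\psi_3^{X\times G}\circ\psi_2^{X\times G}\circ\psi_1^{X\times G}$ of Theorem \ref{1:1 correspondence}, check that it respects the flow grading, and verify $R$-linearity of the three maps in the Alexander case (your linearity argument is the same as the paper's, which notes that $\psi_1,\psi_3$ are clearly linear and that $\psi_2$ is linear because in Lemma \ref{braid coloring} each $y_i$ is a word in the $x_i$ under $\us$ and $\os$). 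The one place you diverge is the flow-preservation step: the paper verifies $\Psi^{X\times G}(\col_{\Q_G(X)\times G}^{\phi'}(D))\subset\col_{X\times G}^{\phi'}(D)$ by direct inspection of Figures \ref{psi1} and \ref{deformation}, whereas you reduce it, via naturality of $\psi_1,\psi_2,\psi_3$ under the MCB homomorphism $\pr_G\colon X\times G\to G$ (with $G$ the conjugation MCB), to identifying $\Psi^G$ with the canonical correspondence between semi-arc and arc $G$-colorings. This repackaging is sound and conceptually attractive: naturality of $\psi_2$ really is automatic (a coloring composed with a homomorphism is a coloring, so the uniqueness clause of Proposition \ref{MCQ,MCB inv.} forces the induced colorings to match), and $\psi_3$ is trivial. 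But, as you yourself flag, naturality of $\psi_1$ and the identification of $\Psi^G$ with the identity-type correspondence can only be established by reading off the explicit formulas in Figure \ref{psi1} and tracking the deformation of Figure \ref{deformation}; so your route does not avoid the figure inspection the paper relies on, it reorganizes it. The net effect is a proof of the same completeness as the paper's, with a naturality lemma that would be a worthwhile addition if stated and checked explicitly.
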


\begin{proof}
We remind that 
we can identify $\col_{X \times G}^\phi(D)$ with $\col_X(D,\phi)$ and $\col_{\Q_G(X) \times G}^\phi(D)$ with  $\col_{\Q_G(X)}(D,\phi)$, 
and we note that 
$\col_{X \times G}(D)=\bigsqcup_{\phi' \in \flow(D;G)}\col_{X \times G}^{\phi'}(D)$ 
and 
$\col_{\Q(X \times G)}(D)=\col_{\Q_G(X) \times G}(D)=\bigsqcup_{\phi' \in \flow(D;G)}\col_{\Q_G(X) \times G}^{\phi'}(D)$.
By the proof of Theorem \ref{1:1 correspondence}, 
the map $\Psi^{X \times G}:= \psi_3^{X \times G} \circ \psi_2^{X \times G} \circ \psi_1^{X \times G}$ is a bijective map 
from $\col_{\Q_G(X) \times G}(D)$ to $\col_{X \times G}(D)$, 
and 
$\Psi^{X \times G}(\col_{\Q_G(X) \times G}^{\phi'}(D)) \subset \col_{X \times G}^{\phi'}(D)$ 
for any $\phi' \in \flow(D;G)$ 
(see Figures \ref{psi1} and \ref{deformation}).
Hence $\Psi^{X \times G} |_{\col_{\Q_G(X) \times G}^\phi(D)}$ is a bijective map 
from $\col_{\Q_G(X) \times G}^\phi(D)$ to $\col_{X \times G}^\phi(D)$.
Next, suppose that $X$ is a $G$-family of Alexander biquandles.
Then $\psi_1^{X \times G}$ and $\psi_3^{X \times G}$ preserve module structures clearly.
Furthermore 
$\psi_2^{X \times G}$ also preserves module structures 
since 
in Lemma \ref{braid coloring}, 
each $y_i$ can be represented by using each $x_i$ and the operations $\us$ and $\os$.
Therefore $\Psi^{X \times G} |_{\col_{\Q_G(X) \times G}^\phi(D)}$ is an isomorphism of right $R$-modules.
\end{proof}

Finally, we see an example.
Let $H_n$ be the handlebody-knot represented by the $\mathbb{Z}_8$-flowed diagram $(D_n,\phi_n)$ 
depicted in Figure \ref{H_n} for any $n \in \mathbb{Z}_{>0}$.
Let $s=t+1 \in \mathbb{Z}_3[t^{\pm 1}]$ 
and let $f(t)=t^2+t+2 \in \mathbb{Z}_3[t^{\pm 1}]$, which is an irreducible polynomial.
Then $X:=\mathbb{Z}_3[t^{\pm 1}]/(f(t))$ is 
a $\mathbb{Z}_8$-family of Alexander biquandles and a field.
By \cite[Example 7.3]{Murao17}, 
it follows that $\dim \col_X(D_n,\phi_n)=n$ as vector spaces over $X$, 
and 
the assignment of elements $x_1,\ldots,x_n$ of $X$ to each semi-arc of $(D_n,\phi_n)$ 
as shown in Figure \ref{H_n} 
corresponds to a basis of $\col_X(D_n,\phi_n)$.
By Proposition \ref{Gbiqnd to Gqnd}, 
$\Q_G(X)$ is a $\mathbb{Z}_8$-family of Alexander quandles 
with $x*^iy=s^{-i}t^ix+(1-s^{-i}t^i)y=t^{2i}x+(1-t^{2i})y$ for any $i \in \mathbb{Z}_8$.
By Corollary \ref{isomorphic}, 
we have $\dim \col_{\Q_G(X)}(D_n,\phi_n)=n$ as vector spaces over $X$, 
and 
the assignment of elements $x_1,\ldots,x_n$ of $X$ to each arc of $(D_n,\phi_n)$ 
as shown in Figure \ref{H_n} 
corresponds to a basis of $\col_{\Q_G(X)}(D_n,\phi_n)$.

\begin{figure}[htb]
\begin{center}
\includegraphics[width=70mm]{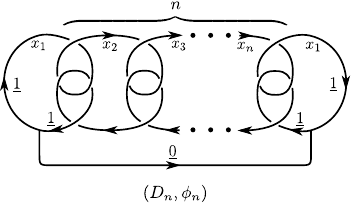}
\end{center}
\caption{A $\mathbb{Z}_8$-flowed diagram $(D_n,\phi_n)$ of $H_n$.}\label{H_n}
\end{figure}

\section*{acknowledgement}
The author would like to express his best gratitude to Katsumi Ishikawa and Kokoro Tanaka 
for their helpful advice and valuable discussions.
He also would like to express his thanks to Atsushi Ishii 
for his  beneficial comments and suggestions.


\end{document}